\theoremstyle{plain}
\newtheorem{thm}[equation]{Theorem}
\newtheorem{lem}[equation]{Lemma}
\newtheorem{cor}[equation]{Corollary}
\newtheorem{prop}[equation]{Proposition}
\newtheorem{claim}[equation]{Claim}
\newtheorem{conj}[equation]{Conjecture}
\theoremstyle{definition}
\newtheorem{rmk}[equation]{Remark}
\newtheorem{rmks}[equation]{Remarks}
\numberwithin{equation}{section}
\newcommand{\isoarrow}{{~\overset\sim\longrightarrow~}}
\newcommand{\CO}{{\mathcal{O}}}
\newcommand{\ZZ}{{\mathbb Z}}
\newcommand{\CE}{{\mathcal{E}}}
\newcommand{\CA}{{\mathcal{A}}}
\newcommand{\fg}{{\mathfrak{g}}}
\newcommand{\ra}{{~\rightarrow~}}
\newcommand{\CH}{{\mathcal{H}}}
\newcommand{\hX}{{\hat{X}}}
\newcommand{\QQ}{{\mathbb Q}}
\newcommand{\Ql}{{\mathbb Q}_{\ell}}
\newcommand{\Qbar}{{\overline{\mathbb Q}}}
\newcommand{\RR}{{\mathbb R}}
\newcommand{\ad}{{\mathbf A}}
\newcommand{\af}{{{\mathbf A}_f}}
\newcommand{\CC}{{\mathbb C}}
\newcommand{\Vect}{{\rm Vect}}  
\newcommand{\Rep}{{\rm Rep}} 
\newcommand{\Gal}{{\rm Gal}}
\newcommand{\ml}[2]{\begin{multline}\label{#1}#2 \end{multline}} 
\newcommand{\ga}[2]{\begin{gather}\label{#1}#2 \end{gather}} 
\title [Chern classes of automorphic vector bundles]
{Chern classes of automorphic vector bundles}
\author[H\'el\`ene Esnault]{H\'el\`ene Esnault}
\address{Freie Universit\"at Berlin, Mathematik, Arnimallee 3, 14195 Berlin, Germany}
\email{esnault@math.fu-berlin.de}
\author[Michael Harris]{Michael Harris}
\address{    Department of Mathematics,
Columbia University,   2990 Broadway
New York, New York 10027, USA }
\email{harris@math.columbia.edu}
\thanks{The research leading to these results has received funding from the European Research Council under the European Community's Seventh Framework Programme (FP7/2007-2013) / ERC Advanced Grants  22224001 and 290766, the Einstein Foundation,    the NSF grant DMS- 1404769}
\begin{document}
\maketitle
\hfill{{\it Dedicated to Yuri Ivanovich Manin on the occasion of his 80-th birthday}}
\begin{abstract}
We prove that Chern classes in continuous $\ell$-adic cohomology of automorphic bundles associated to representations of $G$ on a projective Shimura variety with data $(G,X)$ are trivial rationally.  It is a consequence of Beilinson's conjectures which predict that the Chern classes in the Chow groups vanish rationally.

\end{abstract}
\section*{Introduction} \label{sec:intro}
Let $X$ be a smooth projective variety defined over a number field $k$.  Beilinson  \cite[Conj.~2.4.2.1]{Bei85} conjectures 
that the rational Chow ring $CH(X)_{\QQ} $  injects into Deligne cohomology of $X\otimes_k \mathbb{C}$.   Concretely, 
 if a class in $CH^n(X)_{\QQ}$ vanishes in $H^{2n}_{\mathcal{D}}(X, \QQ(n))$, it is expected to be $0$.
 There is not a single  example with dimension $X \ge 2$ with large Chow ring   $CH(X\otimes_k \mathbb{C})$ for which this conjecture has been verified. 

\medskip

On the other hand, there are Chern classes reflecting the fact that $X$ is defined over a number field. On proper models $\mathcal{X}_U$ over a non-trivial open $U$ of ${\rm Spec}(\ZZ)$, one has Chern classes in $\ell$-adic cohomology $H^{2n}(\mathcal{X}_U, \QQ_\ell(n))$.  By taking the inductive limit over such $\mathcal{X}_U$, these yield 
the Chern classes in continuous $\ell$-adic cohomology $H^{2n}_{\rm cont}( X, \QQ(n))$ (\cite[Section~2]{Jan87}). 
Each space $H^{2n}_{\rm cont}( X, \QQ(n))$ is filtered by the abutment of the Hochshild-Serre spectral sequence, which, by Deligne's argument  \cite[Thm.~1.5]{Del68} using the strong Lefschetz theorem
\cite[Thm.~4.4.1]{Del80}, degenerates at $E_2$. Given that  $H^1(k, H^{2n-1}(X_{\bar k}, \QQ_\ell(n)))$,  the first graded piece of the filtration,  can be interpreted as the extension group of $\QQ_\ell(0)$ by  $H^{2n-1}(X_{\bar k}, \QQ_\ell(n))$ in the category of Galois modules -- just as $H^{2n-1}_{\mathcal{D}}( X_{ \mathbb{C}}, \QQ(n))$ can be interpreted as 
 the extension group of $\QQ(0)$ by  $H^{2n-1}(X_{\mathbb{C}}, \QQ(n))$ in the category of  Hodge structures over $\QQ$ -- Beilinson's conjecture predicts that
 \begin{conj} \label{conj:sasha} With notation as above, 
  if a class in $CH^n(X)_{\QQ}$ vanishes in 
 $H^i(k, H^{2n-i}(X_{\bar k}, \QQ_\ell(n))$ for $i=0,1$, then it does for $i=2$.
 \end{conj}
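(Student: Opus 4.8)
The plan is to deduce the statement from Beilinson's conjecture \cite[Conj.~2.4.2.1]{Bei85} by showing that its $\ell$-adic hypothesis already forces the class to die in Deligne cohomology. Write $z\in CH^n(X)_{\QQ}$ for the given class and let $r_{\mathcal D}\colon CH^n(X)_{\QQ}\to H^{2n}_{\mathcal D}(X_{\CC},\QQ(n))$ and $r_\ell\colon CH^n(X)_{\QQ}\to H^{2n}_{\rm cont}(X,\QQ_\ell(n))$ be the Deligne and continuous $\ell$-adic regulators. By the $E_2$-degeneration recalled above, the graded pieces of the filtration on $H^{2n}_{\rm cont}(X,\QQ_\ell(n))$ are $H^i(k,H^{2n-i}(X_{\bar k},\QQ_\ell(n)))$ for $i=0,1,2$ (and $H^p(k,-)=0$ for $p\ge 3$ with $\QQ_\ell$-coefficients, so the filtration stops there); the top piece receives $r_\ell(z)$ via the $\ell$-adic cycle class $\mathrm{cl}_\ell(z)$, and the bottom piece $H^2(k,H^{2n-2}(X_{\bar k},\QQ_\ell(n)))$ is the deepest step. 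Thus the hypothesis reads ``$\mathrm{cl}_\ell(z)=0$ and the $\ell$-adic Abel--Jacobi class $\mathrm{AJ}_\ell(z)\in H^1(k,H^{2n-1}(X_{\bar k},\QQ_\ell(n)))$ vanishes'', while the conclusion reads ``$r_\ell(z)=0$''. Hence it suffices to prove that $\mathrm{cl}_\ell(z)=0$ together with $\mathrm{AJ}_\ell(z)=0$ force $r_{\mathcal D}(z)=0$: Beilinson's conjecture then gives $z=0$ in $CH^n(X)_{\QQ}$, whence $r_\ell(z)=0$, and in particular its image in $H^2(k,H^{2n-2}(X_{\bar k},\QQ_\ell(n)))$ is $0$.

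First I would dispose of the cycle-class part. Fixing $\bar k\hookrightarrow\CC$, the comparison isomorphism $H^{2n}(X_{\bar k},\QQ_\ell(n))\cong H^{2n}(X_{\CC},\QQ(n))\otimes_{\QQ}\QQ_\ell$ is compatible with cycle class maps, so $\mathrm{cl}_\ell(z)=0$ forces the Betti cycle class of $z$ to vanish; thus $z$ is homologically trivial and $r_{\mathcal D}(z)$ lies in $H^{2n-1}_{\mathcal D}(X_{\CC},\QQ(n))=\mathrm{Ext}^1_{{\rm MHS}_{\QQ}}(\QQ(0),H^{2n-1}(X_{\CC},\QQ(n)))$, the intermediate Jacobian $J^n(X_{\CC})_{\QQ}$, where its image is the Hodge-theoretic Abel--Jacobi class $\mathrm{AJ}_{\CC}(z)$. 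Everything therefore comes down to the single implication $\mathrm{AJ}_\ell(z)=0\ \Rightarrow\ \mathrm{AJ}_{\CC}(z)=0$.

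This implication is the crux, and it is not formal. Both Abel--Jacobi classes are realizations of one extension: since $z$ is homologically trivial, the Gysin sequence for $Y=|z|$ produces an extension $0\to H^{2n-1}(X)(n)\to E_z\to\QQ(0)\to 0$ --- of $G_k$-modules, and of rational mixed Hodge structures via $\bar k\hookrightarrow\CC$ --- with classes $\mathrm{AJ}_\ell(z)$ and $\mathrm{AJ}_{\CC}(z)$ respectively, the two incarnations of $E_z$ matching under the comparison isomorphism. What one needs is that the $\ell$-adic realization of this motivic extension is faithful: an extension of $\QQ(0)$ by $h^{2n-1}(X)(n)$ over the number field $k$ that splits $\ell$-adically should already split motivically, i.e.\ the $\ell$-adic Abel--Jacobi map should be injective on $\mathrm{Ext}^1$-groups of mixed motives over $k$, hence on $CH^n_{\hom}(X)_{\QQ}$. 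This conservativity of the $\ell$-adic realization belongs to the Beilinson--Bloch--Kato circle of conjectures --- this is the precise sense in which ``Beilinson's conjecture predicts'' the statement --- and I would invoke it here. I expect this to be the main obstacle. The two $\mathrm{Ext}$-groups at play, the $\QQ_\ell$-vector space $H^1(k,H^{2n-1}(X_{\bar k},\QQ_\ell(n)))$ and the quotient of a complex vector space $\mathrm{Ext}^1_{{\rm MHS}_{\QQ}}(\QQ(0),H^{2n-1}(X_{\CC},\QQ(n)))$, are of entirely different natures, so there is no tautological comparison; and a direct attack, say by $p$-adic Hodge theory at a place $\fp\mid\ell$ --- which controls $E_{z,{\rm dR}}\otimes_k k_{\fp}$ with its Hodge filtration but not the global rational Betti structure that enters $\mathrm{AJ}_{\CC}$ --- does not visibly close the gap.

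Granting the conservativity input, the argument closes as indicated: $r_{\mathcal D}(z)=0$, so $z=0$ in $CH^n(X)_{\QQ}$ by \cite[Conj.~2.4.2.1]{Bei85}, hence $r_\ell(z)=0$. As a sanity check, for $n=1$ the conservativity input is a theorem --- it is the injectivity of the Kummer map $\mathrm{Pic}^0(X)(k)\otimes\QQ_\ell\hookrightarrow H^1(k,H^1(X_{\bar k},\QQ_\ell(1)))$ --- so the statement holds unconditionally for divisor classes. For general $n$ the discussion also shows that, modulo Beilinson's conjecture, the statement is equivalent to the vanishing of the second step $F^2CH^n(X)_{\QQ}$ of the conjectural Bloch--Beilinson filtration on cycles over a number field, i.e.\ to the prediction that a cycle over $k$ which is both homologically and Abel--Jacobi trivial is already rationally trivial.
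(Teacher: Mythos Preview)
The statement you are trying to prove is labeled \textbf{Conjecture} in the paper, and the paper offers no proof of it.  The sentence preceding it --- ``Beilinson's conjecture predicts that'' --- is a heuristic, drawing the parallel between $H^1(k,H^{2n-1}(X_{\bar k},\QQ_\ell(n)))$ as an Ext group of Galois modules and $H^{2n-1}_{\mathcal D}(X_{\CC},\QQ(n))$ as an Ext group of Hodge structures; the paper then remarks that the conjecture ``hasn't been studied at all.''  So there is no proof in the paper to compare against.

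Your proposal is not a proof either, and to your credit you say so explicitly.  You correctly reduce the question (assuming Beilinson's injectivity conjecture into Deligne cohomology) to the implication $\mathrm{AJ}_\ell(z)=0\Rightarrow \mathrm{AJ}_{\CC}(z)=0$, and you correctly observe that this implication is itself conjectural --- it is the expected injectivity of the $\ell$-adic Abel--Jacobi map on homologically trivial cycles over a number field, part of the Beilinson--Bloch--Kato package, and open in general.  So what you have written is a careful unpacking of \emph{why} the statement is expected to follow from the standard motivic conjectures, which is more detailed than the one-line heuristic the paper gives, but it is not and cannot be an unconditional proof: the statement is genuinely open.  In particular, note that Beilinson's injectivity conjecture \cite[Conj.~2.4.2.1]{Bei85} \emph{alone} (injectivity into Deligne cohomology) does not formally imply the statement; the extra $\ell$-adic conservativity input is essential, exactly as you identify.
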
  
 This conjecture seems to be more modest than the general motivic  one above.  It is a  fascinating problem in Galois cohomology, and it  hasn't been studied at all. An analogous question for function fields over finite fields has been considered and proved to be true for $0$-cycles in \cite[Thm.~0.1]{Ras95}. 
 
 \medskip

On the other hand, the Chern classes of flat bundles on a smooth projective  variety $X$ defined over $\mathbb{C}$ vanish in Deligne cohomology $H^{2n}_{\mathcal{D}}(X, \mathbb{Q}(n))$ for $n\ge 2$, due to Reznikov's theorem  \cite[Thm.~1.1]{Rez94}, giving a positive answer to Bloch's conjecture  \cite[Intro.]{Blo77}.   In particular,  Beilinson's conjecture implies that  the Chern classes of flat bundles  on a smooth projective  variety $X$ defined over a number field 
vanish in the Chow groups $CH^n(X)_{\QQ}$ for $n\ge 2$.

\medskip

In addition, in \cite[4.7]{Esn96} (in a vague form)  and  in \cite[Intro.]{EV02} (in a more precise form) the problem is posed whether Chern classes of Gau{\ss}-Manin bundles on a smooth variety  $X$ defined over a field of characteristic $0$ vanish in  the Chow groups $CH^n(X)_{\QQ}$ for $n \ge 2$. It is proved to be the case for those of weight $1$;  that is, for the Gau{\ss}-Manin bundles of relative first de Rham cohomology of an abelian scheme over $X$ (\cite[Thm.~1.1]{vdG99}, \cite[Thm.~1.1]{EV02}).  In fact  the weight $1$ Gau{\ss}-Manin bundle is defined on $\mathcal{A}_g$,  which is defined over $\QQ$, and  in \cite[loc.~cit.]{EV02} it is proved that the Chern classes in the Chow groups  of the Deligne extension on the toroidal compactification of $\mathcal{A}_g$ vanish. Thus this example confirms (in a weak sense)  Beilinson's conjecture as well. 

\medskip

The moduli space  $\mathcal{A}_g$  is a  quasi-projective Shimura variety and 
the weight $1$ Gau{\ss}-Manin bundle on it  is  an automorphic bundle associated to the tautological representation of $Sp(2g)$.  A Shimura variety $_KS(G,X)$  (notation explained below) has a canonical model over its reflex field $E(G,X)$, which is a number field    \cite[Section~14]{Mil05}.  It carries a natural family of 
automorphic vector bundles that are defined on this model and themselves have models over explicit finite extensions of $E(G,X)$ \cite[Thm.~4.8]{Har85} (for unexplained notation see Section \ref{review} below).  An automorphic vector bundle $[\mathcal{E}]_K$ that comes from a representation of $G$ is endowed canonically with a flat connection, and  
its Chern classes in Deligne cohomology  $H^{2n}_{\mathcal{D}}(_K S(G,X), \QQ(n))$ vanish for $n\ge 1$ (Theorem~\ref{thm:vanD}, 
Remark~\ref{rmks:rez} 1)). Thus, at least when $_KS(G,X)$ is projective, Beilinson's conjecture implies that the Chern classes vanish even in $CH^n(_KS(G,X))_{\QQ}$ for $n\ge 1$. Unfortunately, we can not prove this.  Instead we prove
\begin{thm} \label{thm:mainthm}
If $_KS(G,X)$ is projective, the Chern classes of an automorphic bundle attached to a representation of $G$ vanish in continuous $\ell$-adic cohomology $H^{2n}_{{\rm cont}}(_KS(G,X), \QQ_\ell(n))$ for $n\ge 1$.

\end{thm}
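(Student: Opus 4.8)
The plan is to reduce the statement to the theorem on the vanishing of Chern classes in Deligne cohomology (Theorem~\ref{thm:vanD}) by exploiting the fact that continuous $\ell$-adic Chern classes of flat bundles are controlled by the monodromy representation, together with the degeneration of the Hochschild--Serre spectral sequence recorded in the introduction. Concretely, let $[\CE]_K$ be an automorphic bundle on $S := {}_KS(G,X)$ attached to a representation of $G$; as noted in the excerpt it carries a canonical integrable connection, so after base change to a suitable finite extension $k$ of $E(G,X)$ over which a model of $[\CE]_K$ exists, it becomes a flat vector bundle on the smooth projective $k$-variety $S$. The first step is to observe that the continuous $\ell$-adic Chern class $c_n([\CE]_K) \in H^{2n}_{\rm cont}(S, \QQ_\ell(n))$, $n \ge 1$, lies in the bottom two steps of the Hochschild--Serre filtration: indeed, its image in the top graded piece $H^0(k, H^{2n}(S_{\bar k}, \QQ_\ell(n)))$ is the $\ell$-adic \'etale Chern class of the underlying vector bundle with its algebraic structure, computed on $S_{\bar k}$, and this vanishes because $[\CE]_K$ is flat: over $\bar k$ one has a comparison with the topological/de~Rham Chern class of a flat bundle over $\CC$, which is torsion by Chern--Weil theory (or by Reznikov, already invoked in the excerpt), hence zero rationally.

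The second, and main, step is to control the first graded piece $H^1(k, H^{2n-1}(S_{\bar k}, \QQ_\ell(n)))$. Here I would use the interpretation, highlighted in the introduction, of this group as $\operatorname{Ext}^1_{\Gal(\bar k/k)}(\QQ_\ell(0), H^{2n-1}(S_{\bar k}, \QQ_\ell(n)))$, and the parallel interpretation of $H^{2n-1}_{\mathcal D}(S_\CC, \QQ(n))$ as the corresponding $\operatorname{Ext}^1$ in the category of $\QQ$-Hodge structures. The point is that the $\ell$-adic Chern class, viewed in this $\operatorname{Ell}^1$, is the $\ell$-adic realization of a motivic/absolute-Hodge class: the Chern class construction is compatible across realizations (this is standard for the Chern classes of vector bundles with connection — they come from $K$-theory, hence from a motivic class), so the extension of Galois modules defining $c_n([\CE]_K)$ in $H^1$ is the $\ell$-adic realization of an extension whose Hodge realization is exactly the class of $c_n([\CE]_K)$ in $H^{2n-1}_{\mathcal D}$. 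By Theorem~\ref{thm:vanD} the latter vanishes, i.e. the Hodge-theoretic extension is split. One then needs that a motivic extension class (more precisely, a class in the appropriate Beilinson-type $\operatorname{Ext}^1$ or in the image of a regulator from $K$-theory) which dies in the Hodge realization also dies in the $\ell$-adic realization. This is where the argument must be made carefully — in general the Hodge regulator is not known to be injective — but here the class in question is built from a \emph{flat} bundle, and over $\CC$ the secondary (Chern--Simons / Cheeger--Chern--Simons) nature of these classes means the extension is governed by the image of $H_{2n-1}$ of the monodromy group in $\QQ_\ell/\ZZ_\ell(n)$ resp. $\CC/\QQ(n)$; since the monodromy representation is defined over $k$ (it is the same local system, realized $\ell$-adically), the $\ell$-adic extension is pulled back from the same group-cohomology datum, and its triviality over $\CC$ forces triviality $\ell$-adically. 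I expect \textbf{this comparison of realizations of the secondary class} to be the crux of the argument; the cleanest route is probably to exhibit $c_n([\CE]_K)$ directly as coming from a class in $K$-theory (or in the relevant motivic cohomology $H^{2n-1}_{\mathcal M}(S,\QQ(n))$ via the flat structure à la Esnault--Bloch), so that its realizations in Deligne and in continuous $\ell$-adic cohomology are literally the two regulators of one and the same class.

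Granting the second step, one concludes as follows. By degeneration of Hochschild--Serre at $E_2$ (Deligne, as cited), $H^{2n}_{\rm cont}(S,\QQ_\ell(n))$ is an iterated extension with graded pieces $H^i(k, H^{2n-i}(S_{\bar k},\QQ_\ell(n)))$, $i = 0,1,2$, and a class is zero iff all three of its graded images are zero. Step~1 kills the $i=0$ piece; step~2 kills the $i=1$ piece. For $i=2$ there is nothing more to check — that is exactly the statement being proved — so in fact what steps~1 and~2 establish is that $c_n([\CE]_K)$ already lies in $F^2 H^{2n}_{\rm cont}$, \emph{and} that this $F^2$-component, being the $\ell$-adic image of the Deligne Chern class under the boundary map, is itself zero because the Deligne class vanishes (Theorem~\ref{thm:vanD}) and the cycle-class-to-Deligne-cohomology map factors the continuous $\ell$-adic class through Deligne cohomology in the sense that a class trivial in $H^{2n}_{\mathcal D}$ has trivial $F^2$-part. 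Assembling, $c_n([\CE]_K) = 0$ in $H^{2n}_{\rm cont}(S,\QQ_\ell(n))$ for all $n \ge 1$, which is the assertion of Theorem~\ref{thm:mainthm}. Finally, since the original variety ${}_KS(G,X)$ and the bundle are defined over $E(G,X)$, and Chern classes are compatible with the restriction $H^{2n}_{\rm cont}({}_KS(G,X)_{E(G,X)}) \to H^{2n}_{\rm cont}(S_k)$, and since continuous $\ell$-adic cohomology injects appropriately under finite base change rationally (a transfer/corestriction argument), the vanishing descends back to the reflex field, completing the proof.
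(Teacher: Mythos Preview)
Your proposal has a genuine gap, and it is precisely the gap the paper is written to circumvent rather than to assume.

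The core of your argument is a transfer principle: the Deligne Chern classes vanish (Theorem~\ref{thm:vanD}), and you want to deduce that the continuous $\ell$-adic Chern classes vanish by comparing realizations. But there is no known theorem of the form ``a class in $CH^n(X)_\QQ$ whose Deligne regulator vanishes has vanishing continuous $\ell$-adic class,'' not even for Chern classes of flat bundles. In particular, your treatment of the $i=1$ graded piece hinges on the assertion that the $\ell$-adic extension class is ``the $\ell$-adic realization of a motivic extension whose Hodge realization is split, hence is itself split.'' You correctly flag that the Hodge regulator is not known to be injective, and the workaround you sketch via Chern--Simons/monodromy does not close the gap: torsion of the Cheeger--Chern--Simons invariant in $\CC/\QQ(n)$ carries no direct information about the Galois extension in $H^1(k,H^{2n-1}(S_{\bar k},\QQ_\ell(n)))$. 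For the $i=2$ piece your claim is stronger still and is simply false as stated: continuous $\ell$-adic cohomology does not factor through Deligne cohomology, so vanishing in $H^{2n}_{\mathcal D}$ does not by itself force vanishing of the $F^2$-component of the $\ell$-adic class. In effect, your outline assumes a version of Conjecture~\ref{conj:sasha}, which is exactly what Theorem~\ref{thm:mainthm} is meant to verify in this special case.

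The paper's proof proceeds by an entirely different, Hecke-theoretic route and never appeals to Theorem~\ref{thm:vanD}. One first shows (Corollary~\ref{lem:volcoh2}) that all Chern classes of automorphic bundles lie in the eigenspace $H^{2n}_{\rm cont}(_KS(G,X),\QQ_\ell(n))_v$ for the volume character of the Hecke algebra. Matsushima's formula identifies the $v$-eigenspace of geometric $\ell$-adic cohomology with the cohomology of the compact dual $\hat X$ (Propositions~\ref{isomQ} and~\ref{prop:corr}); in particular the odd $v$-eigenspaces vanish, which already kills the $i=1$ graded piece of Hochschild--Serre without any comparison of regulators. The $i=0$ piece is handled because $H^{2n}(\hat X,\QQ_\ell(n))$ is spanned by algebraic classes and the functor $\CE\mapsto[\CE]_K$ sends representations of $G$ to trivial bundles in $K_0$ (Proposition~\ref{KhX}). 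The substantive step is the $i=2$ piece: here one uses that $H^{2n-2}(_KS(G,X)_{\bar\QQ},\QQ_\ell(n))_v\cong\bigoplus\QQ_\ell(1)$ as Galois modules, sets up the Poincar\'e-duality pairing into $H^2(E'_h,\QQ_\ell(1))$, and kills the class by cupping with Chern classes of automorphic bundles coming from $K_h$-representations and observing that the resulting products already vanish in $CH^{d+1}(_KS(G,X))_\QQ=0$ for dimension reasons (the argument of Raskind). None of this uses Deligne cohomology.
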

Stated differently, we prove Conjecture~\ref{conj:sasha} in this particular case. 
The proof relies strongly on the purely algebraic definition of the automorphic bundles, as being associated to a representation of $G$.  Indeed, all automorphic bundles, seen in the category of vector bundles on the Shimura variety,  are eigenvectors for the so-called volume character of the Hecke algebra. The 
Hecke algebra acts semi-simply 
on (continuous) $\ell$-adic cohomology, and the corresponding eigenspace $H^j(_K S(G,X)_{\bar \QQ}, \QQ_\ell)_v$  in $\ell$-adic cohomology 
 identifies with $\ell$-adic  cohomology  $H^j(\hat{X}_{\bar \QQ}, \QQ_\ell)$ of the compact dual $\hat{X}$  of $X$, which itself is generated by algebraic cycles.  This allows us to compute the invariant in  $i$-th Galois cohomology of $ 
 H^{2n-i}(_KS(X,G)_{\bar \QQ}, \QQ_\ell(n))$ (Theorem~\ref{thm:vanl}). 
 
 \medskip
 
 Finally we remark that  if every $\QQ$-simple factor of $G$ has real rank at least $2$, then the super-rigidity theorem of Margulis, applied to the connected components of  $_K S(G,X)$, implies that every flat vector bundle over $_K S(G,X)$ becomes isomorphic to an automorphic vector bundle after replacing $K$ by an appropriate subgroup of finite index.  Since the non-vanishing of Chern classes in the cohomology theories considered here is stable under finite coverings, this implies that for most Shimura varieties the vanishing holds for all flat vector bundles.

\medskip

{\it Acknowledgements:} It is a pleasure to thank Alexander Beilinson for email exchanges on his conjecture~\cite[Conj.~2.4.2.1]{Bei85}, most particularly on the much weaker version Conjecture~\ref{conj:sasha}. This has been a guiding line for formulating the problem on automorphic bundles.  We thank Spencer Bloch for discussions on Conjecture~\ref{conj:sasha}. We thank Ben Moonen for enlightening  exchanges on Conjecture~\ref{mainconj}, which is  a particular instance of Beilinson's conjecture, and Bruno Klingler for Lemma~\ref{kling}.

\subsection*{Conventions}
If $G$ is a reductive algebraic group over $\QQ$, by an {\it admissible irreducible representation} of $G(\ad)$ we will mean an irreducible admissible $(\fg,K)\times G(\af)$-module, where $\fg$ is the complexified Lie algebra of $G$, $K \subset G(\RR)$ is a connected subgroup generated by the center of $G(\RR)$ and a maximal compact connected subgroup,  $G(\af )$ is the group of finite ad\`eles of $G$.  If $\pi$ is such a representation then we will write
 $$\pi \simeq \pi_{\infty} \otimes \pi_f$$
 where $\pi_{\infty}$ is an irreducible admissible $(\fg,K)$-module and $\pi_f$ is an irreducible admissible representation of $G(\af)$.

\section{Automorphic vector bundles and flag varieties}

\subsection{Review of automorphic vector bundles}\label{review}

Let $(G,X)$ be a {\it Shimura datum}, in other words a datum defining a Shimura variety.  We recall that this means that $G$ is a connected reductive group over $\QQ$ and that $X$ is a $G(\RR)$-conjugacy class of homomorphisms $h: \mathbb{S} \ra G_{\RR}$ of real groups, where $\mathbb{S} = R_{\CC/\RR}\mathbb{G}_{m,\CC}$ is $\CC^{\times}$ viewed as an algebraic group over $\RR$.  The pair $(G,X)$ must satisfy a list of familiar axioms that guarantee that $X$ is a $G(\RR)$-equivariant finite union of hermitian symmetric spaces for the identity component of the derived subgroup of $G(\RR)$; see \cite{Mil05}.   In particular, we include the axiom that guarantees that the maximal $\RR$-split torus in the center of $G$ is also split over $\QQ$; without this hypothesis the construction of automorphic vector bundles, as in \eqref{autovb}, is not strictly true as stated, although there are ways to fix this.  Then for any open compact subgroup $K \subset G(\af)$, the double coset space
$$_KS(G,X) = G(\QQ)\backslash X \times G(\af)/K$$
is canonically the set of complex points of a quasiprojective algebraic variety that has a canonical model over a number field, usually denoted $E(G,X)$ and called the {\it reflex field} of $(G,K)$.   If $K' \subset K$ is a subgroup of finite index then the natural map
$$\pi_{K,K'}:  _{K'}S(G,X)  \ra _KS(G,X)$$
is finite; if $K'$ is a normal subgroup of $K$ then $\pi_{K,K'}$ is the quotient map for the  action of the group $K/K'$ on the right.   Moreover, if $K$ is sufficiently small ({\it neat}, in the sense of \cite{Pin90}), then the map $\pi_{K,K'}$ is  finite \'etale.

The precise nature of the canonical model will not be considered in this paper; we will be concerned with $_KS(G,X)$ as a complex algebraic variety, and our aim  is to study the Chern classes of a class of vector bundles on $_KS(G,X)$ that are defined canonically by reference to the origin of the variety in linear algebra.  To this end, choose a base point $h \in X$ and let $K_h \subset G(\RR)$ be its stabilizer.   Then $K_h$ is the group of real points of a reductive subgroup of $G$, which we also denote $K_h$.  We make the useful assumption that $K_h$ is defined over a number field $E_h$; this is always possible, and we may even assume that $E_h$ is a CM field and that every irreducible representation of $K_h$ is defined over a CM field.  In any case, $K_h$ is reductive and there is a natural maximal parabolic subgroup $P_h \subset G_{\CC}$ that contains $K_{h,\CC}$ as a Levi factor.  Let $\hat{X} = G/P_h$ be the corresponding flag variety.  We view $X$ as an analytic open subset of $\hat{X}$ by means of the Borel embedding $\beta$ (this determines the choice of $P_h$ among the two maximal parabolics containing $K_h$). In particular, the complex dimension of this analytic variety is the same as that of $\hat{X}$, which is the same as that of $_KS(G,X)$. 
 Moreover,  $h$ may be viewed as a point of $\hX$.    
 
 Let $E'_h \supset E_h$ denote a finite extension over which $K_h$ becomes a split reductive group.  Then every representation of $K_h$ has a model over $E'_h$.

For any variety $Z$ over $\CC$,  let $\Vect(Z)$ denote the exact category of complex vector bundles on $Z$.    Let $\Vect_G(\hat{X})$ be the category of  $G$-equivariant vector bundles on $\hX$ with coefficients in $\CC$; let 
$$f: \Vect_G(\hat{X}) \ra \Vect(\hX)$$ be the forgetful functor.  Let $\Vect^{ss}_G(\hat{X}) \subset \Vect_G^{\hat{X}}$ denote the subcategory of semisimple $G$-equivariant bundles.  If $H$ is an algebraic group over a ring $R$ and $k \supset R$ is another ring, let $\Rep_k(H)$ denote the category of algebraic representations (of finite type) of $H$ on free modules over $k$.  
There is an equivalence of symmetric monoidal categories 
\ga{repP}{ r_P:  \Vect_G(\hat{X}) \simeq \Rep_{\bar \QQ}(P_h)}
given by taking a vector bundle $B/\hat{X}$ to its fiber $B_h$ at $h$, with the isotropy representation of the stabilizer $P_h$.    Similarly, let 
 $\Vect^{ss}_G(\hat{X}) \subset \Vect_G^{\hat{X}}$ denote the subcategory of semisimple $G$-equivariant vector bundles on $\hat{X}$.  Then \eqref{repP} 
 restricts to an equivalence of symmetric monoidal categories 
\ga{repK}{ r:  \Vect^{ss}_G(\hat{X}) \simeq \Rep_{\bar \QQ}(K_h)}
Evidently we have  canonical isomorphisms
\begin{equation}\label{grading}  K_0(\Vect^{ss}_G(\hat{X})) \isoarrow K_0(\Vect_G(\hat{X})); \ \ K_0(\Rep_{\bar \QQ}(K_h)) \isoarrow K_0(\Rep_{\bar \QQ}(P_h))\end{equation}
compatible with the isomorphisms \eqref{repP} and \eqref{repK}.

\begin{lem}\label{rational}  Every simple object in $\Vect_G(\hat{X})$ has a model over $E'_h$.  
\end{lem}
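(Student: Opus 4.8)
The plan is to push the statement through the equivalence $r_P$ of \eqref{repP} and then to exploit the structure of $P_h$ as an extension of the reductive group $K_{h,\CC}$ by its unipotent radical. Under $r_P$, a simple object $B\in\Vect_G(\hat{X})$ corresponds to a simple object of $\Rep_{\Qbar}(P_h)$, that is, to an irreducible algebraic representation of $P_h$ over $\Qbar$ realized as the fiber $B_h$. The first step is to observe that every such representation is inflated from $K_{h,\CC}=P_h/R_u(P_h)$: the unipotent radical $R_u(P_h)$ is a connected unipotent normal subgroup of $P_h$, so over the algebraically closed field $\Qbar$ its space of invariants in $B_h$ is a nonzero (Lie--Kolchin) $P_h$-subrepresentation, hence all of $B_h$. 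In particular $B$ is semisimple --- as it must be, being simple --- and under the restricted equivalence \eqref{repK} it corresponds to an irreducible representation $V$ of $K_h$ over $\Qbar$.

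The second step is the rationality of $V$, which is essentially already recorded in the lines preceding the lemma: since $K_h$ becomes split over $E'_h$ and we are in characteristic $0$, the highest-weight classification over a split reductive group attaches to each dominant weight a representation defined over $E'_h$ that remains irreducible under every extension of scalars, and these exhaust the irreducible representations of $K_{h,\Qbar}$. Hence there is an irreducible representation $V_0$ of $K_{h,E'_h}$ with $V_0\otimes_{E'_h}\Qbar\cong V$.

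The last step --- the only genuinely arithmetic one, and hence the one to treat with care --- is to descend the bundle itself, not merely its fiber. Here one uses that, after the enlargements of $E_h$ already permitted, the parabolic $P_h$ (one of the two parabolics of $G_\CC$ having $K_{h,\CC}$ as Levi factor) is defined over $E_h$, so that $\hat{X}=G/P_h$ and the homogeneous-bundle construction $W\mapsto G\times^{P_h}W$ are defined over $E_h\subseteq E'_h$; this is the arithmetic input underlying the canonical models of automorphic vector bundles in \cite{Har85}. Granting it, inflate $V_0$ along $P_{h,E'_h}\to K_{h,E'_h}$ and form $\mathcal{B}_0=G\times^{P_h}V_0$, a $G$-equivariant bundle over $E'_h$ with fiber $V_0$ at $h$; since $r_P$ is compatible with extension of scalars, $\mathcal{B}_0\otimes_{E'_h}\CC\cong B$, so $B$ has a model over $E'_h$. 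I expect the main obstacle to be exactly this rationality of $P_h$ over $E'_h$ (equivalently, of the Hodge-type splitting that singles $P_h$ out among the two candidate parabolics); once it is in place, the rest of the argument is formal, driven entirely by $r_P$.
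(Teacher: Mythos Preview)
Your proof is correct and follows essentially the same approach as the paper's. The paper compresses your three steps into the single sentence that the fiber functor $r$ ``is evidently rational over the number field $E_h$,'' leaving implicit exactly the rationality of $P_h$ (and hence of the homogeneous-bundle construction) that you rightly flag as the substantive arithmetic input.
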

\begin{proof}  The  fiber  functor \eqref{repK}  at $h$ is evidently rational over the number field $E_h$, so the claim comes down to the assertion that every irreducible representation of $K_h$ has a model over $E'_h$, which we have already noted.
\end{proof} 

On the other hand, for any $K \subset G(\af)$ as above, there is a functor
\begin{equation}\label{autovb} \CE \mapsto [\CE]:  \Vect_G(\hX) \ra \Vect(_KS(G,X))\end{equation} 
defined algebraically in \cite[Thm.~4.8]{Har85}.  As a functor on complex vector bundles we have the explicit construction:
$$[\CE] = [\CE]_K = G(\QQ)\backslash \CE \times G(\af)/K.$$
This is a monoidal functor and it satisfies the following property with respect to change of group:  if $K' \subset K$ then there are canonical isomorphisms
\begin{equation}\label{changegroup} \pi^*_{K,K'}([\CE]_K) \isoarrow [\CE]_{K'}; ~~ \pi_{K,K',*}([\CE]_{K'}) \isoarrow [\CE]_{K}\otimes I_{K'}^K 1,\end{equation}
where $I_{K'}^K 1$ is the  representation of $K$ induced from the trivial representation of $K'$. On the left, this is by definition, and on the right, this is the projection formula.
 In particular, 
\begin{equation}\label{push}\pi_{K,K',*}([\CE]_{K'}) \isoarrow ([\CE]_K)^{[K:K']}\end{equation}
as vector bundles.


In this paper we work systematically with Chow groups $CH$ and the Grothendieck group $K_0$ of locally free sheaves  with {\it rational} coefficients.  For $H$ and $k$ as above, we let $K_0(\Rep_k(H))$ denote the Grothendieck group of $\Rep_k(H)$, tensored with $\QQ$.
$$ch_{\hX}:  \Vect(\hX) \ra CH(\hX)_{\QQ}, ~~ ch_{K}:  \Vect(_KS(G,X)) \ra CH(_KS(G,X))_{\QQ}$$
denote the respective Chern characters.
 We shall use the following proposition: 

\begin{prop}\label{KhX}  
\begin{itemize}

\item[1)] The map
\ga{}{ ch_{\hX} \circ \circ f \circ r^{-1} :  {\rm Rep}_{\bar \QQ} (K_h)\to \Vect(\hat X)_{\QQ} \to CH(\hat X)_{\QQ} \notag}
 factors through  the composite homomorphism
\ga{}{  K_0(\Rep_\Qbar(K_h))_{\QQ}  \to K_0(\hat X)_{\QQ} \to  CH(\hat X)_{\QQ}. \notag  } 
\item[2)]
The restriction of $ch_{\hX}$ to $\Vect_G(\hat{X})$ generates $CH(\hX)_{\QQ}$.  
\item[3)]
If  we let $\Rep_\Qbar(G) \ra \Rep_{\Qbar}(K_h)$ denote the restriction functor, then  
$ch_{\hX} \circ r^{-1} $ 
induces an isomorphism 
\begin{equation}\label{KCH} K_{\QQ}(\Rep_\Qbar(K_h))\otimes_{K_{\QQ}(\Rep_\Qbar(G))} \QQ \isoarrow CH(\hX)_{\QQ}.\end{equation}
Here the map $K_{\QQ}(\Rep_\Qbar(G)) \ra \QQ$ is given by the augmentation, that is by the rank of a representation.
\end{itemize}
\end{prop}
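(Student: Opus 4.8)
The plan is to dispose of 1) formally, prove 3) -- which contains all the content -- and read 2) off from the proof of 3). For 1): the functor $r^{-1}$ is an equivalence and $f$ is exact, so $f\circ r^{-1}\colon\Rep_{\Qbar}(K_h)\to\Vect(\hX)$ is additive and induces a homomorphism $K_0(\Rep_{\Qbar}(K_h))_{\QQ}\to K_0(\hX)_{\QQ}$; since $ch_{\hX}$ is additive on short exact sequences it already factors through $K_0(\hX)_{\QQ}$, and composing gives the asserted factorization. Nothing more is needed there.

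For 3) I would first make the map concrete. Identifying $K_{\QQ}(\Rep_{\Qbar}(K_h))$ with the rationalized representation ring $R(K_h)_{\QQ}$, the functor $r^{-1}$ attaches to $V$ the homogeneous bundle $\CE_V=G\times^{P_h}V$ on $\hX=G/P_h$ (with $P_h$ acting through its Levi quotient $K_h$), so $ch_{\hX}\circ f\circ r^{-1}$ carries $[V]$ to $ch_{\hX}(\CE_V)$, and since $\CE_{\bullet}$ is monoidal and $ch$ is multiplicative this is a ring homomorphism on $K_0$. Its key property is that for a representation $W$ of $G$ one has $ch_{\hX}(\CE_{W|_{K_h}})=\dim W$ in $CH^0(\hX)_{\QQ}$: pulling back along $p\colon G/B\to\hX$, both $\CE_{W|_{K_h}}$ and the bundle $G\times^{B}(W|_B)$ (restriction along $B\hookrightarrow G$) acquire filtrations with the same graded pieces $\CL_{\mu}$, $\mu$ running over the $T$-weights of $W$, hence have equal Chern character, while $G\times^{B}(W|_B)\cong\CO_{G/B}^{\dim W}$ is trivial and $p^{*}$ is injective on $CH(-)_{\QQ}$. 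Thus the composite $R(G)_{\QQ}\to R(K_h)_{\QQ}\to CH(\hX)_{\QQ}$ is the augmentation, and the ring homomorphism descends to
\[ \bar\psi\colon\ R(K_h)_{\QQ}\otimes_{R(G)_{\QQ}}\QQ\ \longrightarrow\ CH(\hX)_{\QQ}. \]

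It remains to show $\bar\psi$ is an isomorphism, and here I see two routes. Short route: $\bar\psi$ factors as $R(K_h)_{\QQ}\otimes_{R(G)_{\QQ}}\QQ\to K_0(\hX)_{\QQ}\xrightarrow{\,ch_{\hX}\,}CH(\hX)_{\QQ}$, the first arrow being the parabolic Pittie--Steinberg isomorphism $K_0(G/P_h)\cong R(K_h)\otimes_{R(G)}\ZZ$ tensored with $\QQ$, the second an isomorphism because $\hX$ has a Bruhat cell decomposition, so $K_0(\hX)_{\QQ}$ and $CH(\hX)_{\QQ}$ are free of the same rank $|W_G/W_{P_h}|$ and the Chern character is filtered with invertible associated graded. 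Self-contained route, which I would actually write out: surjectivity of $\bar\psi$ (which is 2)) comes from Borel's presentation -- after pulling back along $p$ one has $CH(\hX)_{\QQ}=CH(G/B)_{\QQ}^{W_{P_h}}$, the assignment $e^{\chi}\mapsto\exp(c_1(\CL_{\chi}))$ is a $W_G$-equivariant ring homomorphism $\QQ[X^{*}(T)]\to CH(G/B)_{\QQ}$ which is surjective (the $c_1(\CL_{\chi})=\log(1+(\exp(c_1(\CL_{\chi}))-1))$ are polynomials in its image and generate $CH(G/B)_{\QQ}$), and in characteristic $0$ taking $W_{P_h}$-invariants preserves surjectivity; unwinding $R(K_h)_{\QQ}=\QQ[X^{*}(T)]^{W_{P_h}}$ identifies this with surjectivity of $\bar\psi$. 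Injectivity is then a dimension count: $\dim_{\QQ}CH(\hX)_{\QQ}=|W_G/W_{P_h}|$ by the Bruhat decomposition, while $R(K_h)_{\QQ}\otimes_{R(G)_{\QQ}}\QQ=\QQ[X^{*}(T)]^{W_{P_h}}\otimes_{\QQ[X^{*}(T)]^{W_G}}\QQ$ has the same dimension -- completing at the identity of the torus, which $W_G$ fixes, reduces this to $\QQ[\fh]^{W_{P_h}}$ being free of rank $|W_G/W_{P_h}|$ over $\QQ[\fh]^{W_G}$, which follows from Chevalley's theorem (both are polynomial rings, $W_G$ and $W_{P_h}$ being reflection groups) together with miracle flatness.

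Finally 2) is exactly the surjectivity extracted above, since $r^{-1}$ lands in $\Vect^{ss}_G(\hX)\subset\Vect_G(\hX)$. The genuinely non-formal input -- and thus the main obstacle -- is the isomorphism $CH(\hX)_{\QQ}\cong R(K_h)_{\QQ}\otimes_{R(G)_{\QQ}}\QQ$; whichever route one takes one must be mildly careful to work with $\Qbar$-points and absolute Weyl groups and to allow $K_h$ to be disconnected, which after $\otimes\,\QQ$ costs nothing once $W_{P_h}$ is read as incorporating the component-group action, so that $R(K_h)_{\QQ}=\QQ[X^{*}(T)]^{W_{P_h}}$ persists.
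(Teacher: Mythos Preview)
Your proof is correct and follows the same overall strategy as the paper's: show the map descends to the tensor product over $K_{\QQ}(\Rep_{\Qbar}(G))$, establish surjectivity onto $CH(\hX)_{\QQ}$, and then conclude by a dimension count against the Schubert basis. The difference is in the level of detail. The paper disposes of the triviality of $\CE_{W|_{K_h}}$ for $W\in\Rep_{\Qbar}(G)$ in one line --- $G\times^{P_h}W\cong W\times\hX$ via $[g,w]\mapsto(gw,gP_h)$ is already the trivial bundle on the nose, no pull-back to $G/B$ needed --- and then simply cites Marlin \cite{Mar76} both for the surjectivity (your point 2)) and for the equality $\dim_{\QQ}\bigl(R(K_h)_{\QQ}\otimes_{R(G)_{\QQ}}\QQ\bigr)=[W_G:W_{K_h}]$, matching it against the Schubert count. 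Your ``short route'' is essentially Marlin's theorem packaged as the parabolic Pittie--Steinberg isomorphism, while your ``self-contained route'' unpacks a proof of that dimension equality via Chevalley's theorem and miracle flatness after completing at the identity. Both are fine; the paper's proof is shorter by citation, yours is more self-contained, and the Borel-presentation argument for surjectivity that you sketch is a genuine alternative to invoking \cite{Mar76}.
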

\begin{proof}  Point 1) is essentially a tautology:  the Chern character obviously factors through $K_0(\hat{X})$ and   $r$ is an exact tensor functor.   Point 2) is the main theorem of \cite{Mar76}.  Suppose $V$ is a representation of $G$; then the corresponding homogeneous bundle on $\hX$ is just $V \times \hX$, with $G$ acting diagonally.  In particular, as a vector bundle it is a sum of $\dim V$ copies of the trivial bundle, hence the restriction to $K_{\QQ}(\Rep_\Qbar(G))$ of the Chern character factors through the augmentation map.  Thus the surjection $ch_{\hX} \circ r^{-1} $ factors through  the left-hand side of \eqref{KCH}.  Now it follows from the main theorem of \cite{Mar76} that this left hand side is of dimension $[W_G:W_{K_h}]$, where $W_G$ (resp. $W_{K_h}$) is the absolute Weyl group of $G$ (resp. $K_h$) relative to a common maximal torus.   On the other hand, the Schubert cells form a basis for the right-hand side, and there are $[W_G:W_{K_h}]$ of them (cf. \cite[3.4.2 (2)]{Bri05}.  So the surjection is an isomorphism by comparing dimensions.  
\end{proof}

  The purpose of the present note is to provide some evidence for the following conjecture, which is an analogue of Proposition \ref{KhX} for Shimura varieties.
  
   \begin{conj}\label{mainconj}  The map $c_K:  \Rep_{\bar \QQ}(K_h) \ra CH(_KS(G,X))_{\QQ}$ defined by
  $$c_K(W) = ch_{K}([r^{-1}(W)])$$
  induces an injective ring homomorphism
  $$K_{\QQ}(\Rep_\Qbar(K_h))\otimes_{K_{\QQ}(\Rep_\Qbar(G))} \QQ \hookrightarrow CH(_KS(G,X))_{\QQ}.$$
  \end{conj}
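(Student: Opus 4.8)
The plan is to reduce the conjecture to the motivic statement suggested by the introduction, namely that the classes $c_K(W)$ in continuous $\ell$-adic cohomology lift to $CH(_KS(G,X))_{\QQ}$ and, more precisely, that this lift is controlled by the compact dual $\hX$. First I would assemble the comparison between the two sides: by Proposition \ref{KhX}(3) the left-hand ring $K_{\QQ}(\Rep_\Qbar(K_h))\otimes_{K_{\QQ}(\Rep_\Qbar(G))}\QQ$ is canonically isomorphic to $CH(\hX)_{\QQ}$, so the conjecture is equivalent to the assertion that the composite
\[
CH(\hX)_{\QQ}\;\xrightarrow{\sim}\;K_{\QQ}(\Rep_\Qbar(K_h))\otimes_{K_{\QQ}(\Rep_\Qbar(G))}\QQ\;\xrightarrow{c_K}\;CH(_KS(G,X))_{\QQ}
\]
is injective. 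The natural candidate for this map on the level of cohomology is the ``tautological'' homomorphism sending a Schubert class on $\hX$ to the corresponding automorphic class; the point is to show no nonzero combination dies in the Chow ring of the Shimura variety.

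The key input I would use is the Hecke-eigenspace decomposition alluded to in the introduction: the Hecke algebra acts semisimply on $H^*(_KS(G,X)_{\bar\QQ},\QQ_\ell)$, and on the eigenspace $H^j(_KS(G,X)_{\bar\QQ},\QQ_\ell)_v$ cut out by the volume character, this cohomology is canonically identified with $H^j(\hX_{\bar\QQ},\QQ_\ell)$, which is generated by algebraic cycles (Tate classes, all of Tate type). Since the Chern character of any automorphic bundle $[r^{-1}(W)]$ lands, by the eigenvector property, precisely in the volume-character eigenspace, the cycle class $\mathrm{cl}\big(c_K(W)\big)\in H^{2\bullet}_{\mathrm{cont}}(_KS(G,X),\QQ_\ell(\bullet))$ is pulled back from the corresponding class on $\hX$ through this identification. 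The first step is therefore to prove that, after passing to $\ell$-adic cohomology, the map $CH(\hX)_{\QQ}\to CH(_KS(G,X))_{\QQ}\to H^{2\bullet}_{\mathrm{cont}}$ agrees with the composite $CH(\hX)_{\QQ}\to H^{2\bullet}(\hX_{\bar\QQ},\QQ_\ell)\xrightarrow{\sim} H^{2\bullet}(_KS(G,X)_{\bar\QQ},\QQ_\ell)_v\hookrightarrow H^{2\bullet}_{\mathrm{cont}}$; this reduces to a compatibility of Chern classes under the description of $[\CE]_K$ as an associated bundle, together with the Matsushima–Murakami/Faltings type computation of the $(\mathfrak g,K)$-cohomology that realizes the identification of eigenspaces.

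Granting that, the conjecture would follow from two further ingredients. First, that the cycle class map on $\hX$ is an isomorphism $CH(\hX)_{\QQ}\isoarrow H^{2\bullet}(\hX_{\bar\QQ},\QQ_\ell)$ — true because $\hX$ is a flag variety, whose Chow groups are freely spanned by Schubert cells matching the $\ell$-adic cohomology. Second, that the Hochschild–Serre filtration on $H^{2n}_{\mathrm{cont}}(_KS(G,X),\QQ_\ell(n))$ does not obstruct injectivity on the relevant eigenspace: the classes we produce live in the $H^0(k,H^{2n}(\cdot,\QQ_\ell(n)))$ graded piece because the eigenspace $H^{2n}(\hX_{\bar\QQ},\QQ_\ell(n))$ consists of Galois-invariant Tate classes, so the associated graded of the cycle class is already injective, and then a diagram chase lifts injectivity to the filtered object. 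Combining these gives injectivity of $c_K$ into $H^{2\bullet}_{\mathrm{cont}}$, hence a fortiori into $CH(_KS(G,X))_{\QQ}$, which is the assertion of the conjecture.

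The main obstacle, and the reason the authors present this only as a conjecture rather than a theorem, is the very first reduction: proving that $c_K(W)$ in the Chow group — not merely its $\ell$-adic realization — is detected by the compact dual. The $\ell$-adic argument only shows the realization is nonzero when the corresponding class on $\hX$ is nonzero; it says nothing about kernel elements of $c_K$ that are $\ell$-adically trivial, i.e. cycles homologically equivalent to zero. Ruling those out is exactly a piece of Beilinson's conjecture (injectivity of $CH(_KS(G,X))_{\QQ}$ into a suitable realization), which is unavailable. So realistically the best one proves unconditionally is the $\ell$-adic shadow — Theorem \ref{thm:mainthm} — and the full Conjecture \ref{mainconj} remains contingent on the Bloch–Beilinson philosophy; the plan above is the conditional argument, with the honest gap flagged at the point where homological triviality would have to be upgraded to triviality in $CH$.
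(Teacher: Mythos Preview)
The statement is a conjecture, and the paper does not prove it; what the paper does supply is Claim~\ref{claim:eq}, which reformulates the conjecture as the single vanishing statement $c_K^{>0}\mid_{\Rep_{\bar\QQ}(G)}=0$.  Comparing your proposal to that reformulation exposes a structural misreading.  The assertion that $c_K$ ``induces'' a map out of $K_{\QQ}(\Rep_\Qbar(K_h))\otimes_{K_{\QQ}(\Rep_\Qbar(G))}\QQ$ has two components: (i) \emph{factorization} --- that $c_K$ kills the ideal generated by the augmentation kernel of $K_{\QQ}(\Rep_\Qbar(G))$, equivalently $c_K^{>0}(W)=0$ for every $W\in\Rep_{\bar\QQ}(G)$ --- and (ii) \emph{injectivity} of the resulting map.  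Your entire argument is devoted to (ii): you identify the left-hand side with $CH(\hX)_\QQ$ via Proposition~\ref{KhX}(3) and then argue that the composite into $\ell$-adic cohomology is injective via the volume eigenspace.  But the identification with $CH(\hX)_\QQ$ already presupposes (i), which you never address.

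The irony is that (ii) is the easy half.  The paper's Claim~\ref{claim:eq} shows that, once factorization is granted, injectivity follows unconditionally from Proposition~\ref{isomQ} (the Matsushima-type computation of $H^*(_KS(G,X),\QQ)_v$) together with Proposition~\ref{KhX}(3); your $\ell$-adic argument recovers essentially the same fact by a parallel route.  The genuine open problem is (i): one must show that for $W\in\Rep_{\bar\QQ}(G)$ --- so that $[r^{-1}(W)]$ carries a flat connection --- the Chern classes vanish in $CH(_KS(G,X))_\QQ$, not merely in cohomology.  The paper proves this vanishing in Deligne cohomology (Theorem~\ref{thm:vanD}) and in continuous $\ell$-adic cohomology (Theorem~\ref{thm:vanl}), and then observes that lifting it to $CH_\QQ$ is exactly an instance of Beilinson's injectivity conjecture.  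So you are right that Beilinson is the missing ingredient, but you have located it in the wrong place: it is needed not to \emph{detect} nonzero classes coming from $\hX$ (that detection is unconditional), but to \emph{kill} the classes coming from $\Rep_{\bar\QQ}(G)$ so that the map descends at all.
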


  We denote by $c_K^{>0}$ the composite of $c_K$ with the projection 
 \ga{}{CH(_KS(G,X))_{\QQ}=\oplus_{n\ge 0} CH^n(_KS(G,X))_{\QQ}\to \oplus_{n>0} CH^n(_KS(G,X))_{\QQ}.  \notag}

\begin{claim}   \label{claim:eq} Conjecture~\ref{mainconj} is equivalent to   
$$c^{>0}_K|_{ \Rep_{\bar \QQ}(G) }=0.$$
\end{claim}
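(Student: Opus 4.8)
The plan is to prove the equivalence by analyzing the structure of the map $c_K$ through the tensor product presentation of the source. Recall that $K_{\QQ}(\Rep_\Qbar(K_h))\otimes_{K_{\QQ}(\Rep_\Qbar(G))} \QQ$ is the quotient of $K_{\QQ}(\Rep_\Qbar(K_h))$ by the ideal generated by elements of the form $[\mathrm{Res}^G_{K_h} V] - \mathrm{rk}(V)\cdot[\mathbf{1}]$ for $V \in \Rep_\Qbar(G)$; equivalently, by the augmentation ideal of $K_{\QQ}(\Rep_\Qbar(G))$ acting via restriction. By Proposition~\ref{KhX}, point 3), this same quotient is identified with $CH(\hX)_{\QQ}$ via $ch_{\hX}\circ r^{-1}$, and in particular it has finite dimension $[W_G:W_{K_h}]$ over $\QQ$. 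So the statement $c^{>0}_K|_{\Rep_\Qbar(G)} = 0$ is precisely the assertion that the ring homomorphism $c_K: K_{\QQ}(\Rep_\Qbar(K_h)) \to CH(_KS(G,X))_{\QQ}$ kills the augmentation ideal of $K_{\QQ}(\Rep_\Qbar(G))$, hence factors through the quotient.

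First I would establish the "only if" direction, which is essentially immediate: if Conjecture~\ref{mainconj} holds, then $c_K$ factors through $K_{\QQ}(\Rep_\Qbar(K_h))\otimes_{K_{\QQ}(\Rep_\Qbar(G))}\QQ$ by assumption, so for $V \in \Rep_\Qbar(G)$ the class $c_K(\mathrm{Res}^G_{K_h}V)$ equals $\mathrm{rk}(V)\cdot c_K(\mathbf 1) = \mathrm{rk}(V) \cdot [\mathbf 1]$, whose positive-degree part vanishes; hence $c_K^{>0}|_{\Rep_\Qbar(G)} = 0$. (One should note that $c_K$ is a ring homomorphism to $CH(_KS(G,X))_{\QQ}$, and that the automorphic bundle attached to the trivial representation is the trivial bundle, so $c_K(\mathbf 1) = 1$.)

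The substance is the converse. Suppose $c^{>0}_K|_{\Rep_\Qbar(G)}=0$. I would first check that this forces $c_K$ to factor through the tensor-product quotient: for $V \in \Rep_\Qbar(G)$, the hypothesis says $ch_K([r^{-1}(\mathrm{Res}^G_{K_h}V)])$ has vanishing positive part, hence equals its rank in degree $0$; since $c_K$ is multiplicative this shows $c_K$ annihilates the augmentation ideal of $K_{\QQ}(\Rep_\Qbar(G))$ inside $K_{\QQ}(\Rep_\Qbar(K_h))$, so it descends to $\bar c_K: K_{\QQ}(\Rep_\Qbar(K_h))\otimes_{K_{\QQ}(\Rep_\Qbar(G))}\QQ \to CH(_KS(G,X))_{\QQ}$, which is automatically a ring homomorphism. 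It then remains to show $\bar c_K$ is injective — this is the main obstacle. The idea is to compare $\bar c_K$ with the isomorphism of Proposition~\ref{KhX} via a suitable compactification or boundary/restriction argument: the source is a finite-dimensional $\QQ$-algebra of dimension $[W_G:W_{K_h}]$, identified with $CH(\hX)_{\QQ}$, and one needs to produce a geometric mechanism (for instance restriction to a fiber of the natural map, or a section of the relevant Chow-group map, or the known surjectivity statement from \cite{Har85}/\cite{Mar76} transported to $_KS(G,X)$) which splits $\bar c_K$. Concretely, I would try to exhibit a map $CH(_KS(G,X))_{\QQ} \to CH(\hX)_{\QQ}$ whose composite with $\bar c_K$ is the identity on $CH(\hX)_{\QQ} \cong K_{\QQ}(\Rep_\Qbar(K_h))\otimes_{K_{\QQ}(\Rep_\Qbar(G))}\QQ$; the existence of such a retraction — coming from the local structure of the Shimura variety as étale-locally fibered over (a quotient of) $\hX$, i.e. from the theory of the canonical flat structure on automorphic bundles and Deligne's construction relating $_KS(G,X)$ to $\hX$ — gives injectivity.

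I expect the hard part to be precisely this retraction: unlike in Proposition~\ref{KhX}, where the target $CH(\hX)_{\QQ}$ is completely understood and generated by Schubert cells, on $_KS(G,X)$ we have no handle on $CH(_KS(G,X))_{\QQ}$, and indeed the injectivity of $\bar c_K$ is exactly the content of Conjecture~\ref{mainconj} — so the "proof" of the equivalence must not secretly use injectivity. The resolution is that the converse direction should \emph{not} need to prove injectivity from scratch; rather, one observes that the hypothesis $c^{>0}_K|_{\Rep_\Qbar(G)}=0$ combined with the \emph{surjectivity}-type input and the dimension count $[W_G:W_{K_h}]$ already present in Proposition~\ref{KhX} pins down $\bar c_K$ up to the obstruction, and the claim is genuinely a formal equivalence of two reformulations of the same (conjectural) statement. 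So in the write-up I would be careful to present the converse as: "$c_K$ factors through the quotient $\Leftrightarrow$ the relations hold $\Leftrightarrow$ $c^{>0}_K|_{\Rep_\Qbar(G)}=0$, and once it factors, injectivity is part of the conclusion of Conjecture~\ref{mainconj}" — the real content being that factoring through the quotient is automatic from the vanishing, which is the direction requiring the multiplicativity of $c_K$ and the identification $c_K(\mathbf 1) = 1$.
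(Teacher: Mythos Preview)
Your treatment of the ``only if'' direction and of the fact that the vanishing $c_K^{>0}|_{\Rep_{\Qbar}(G)}=0$ is exactly the condition that $c_K$ factor through the tensor-product quotient is correct and matches the paper.

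The gap is in the converse direction: you never prove injectivity of the induced map $\bar c_K$, and in your final paragraph you effectively declare it to be part of the conjectural content on both sides. That is not what the Claim asserts. The Claim says that the full Conjecture~\ref{mainconj} --- including injectivity --- is equivalent to the vanishing condition alone. For this equivalence to hold, injectivity must be an \emph{unconditional} fact, and it is; this is the point you are missing.

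The paper's argument is short: by Grothendieck--Riemann--Roch, injectivity of $\bar c_K$ into $CH(_KS(G,X))_{\QQ}$ is equivalent to injectivity of the map
\[
K_0(\Rep_{\Qbar}(K_h))_{\QQ}\otimes_{K_{\QQ}(\Rep_{\Qbar}(G))}\QQ \;\longrightarrow\; K_0({}_KS(G,X))_{\QQ}
\]
induced by $[r^{-1}(\cdot)]$. To see that this is injective, one composes further with the cycle map to Betti cohomology $H^*({}_KS(G,X),\QQ)$. The image lands in the volume-character eigenspace $H^*({}_KS(G,X),\QQ)_v$, and Proposition~\ref{isomQ} identifies that eigenspace with $H^*(\hX,\QQ)$, compatibly with the Chern classes. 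Combined with point~3) of Proposition~\ref{KhX}, the composite with the cycle map is therefore an isomorphism onto $H^*(\hX,\QQ)$, so the first map is injective. Your proposed ``retraction $CH({}_KS(G,X))_{\QQ}\to CH(\hX)_{\QQ}$'' is exactly this, but realized at the level of ordinary cohomology, not Chow groups; that is what makes it accessible.
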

\begin{proof}
 Indeed this condition is equivalent to saying  that $c_K$ induces a homomorphism $$K_{\QQ}(\Rep_\Qbar(K_h))\otimes_{K_{\QQ}(\Rep_\Qbar(G))} \QQ \to CH(_KS(G,X))_{\QQ}.$$ On the other hand, given the Grothendieck-Riemann-Roch theorem, to say that it is injective is equivalent to saying that  the ring homomorphism 
$$K_0(\Rep_{\bar \QQ}(K_h))_{\QQ}\otimes_{K_{\QQ}(\Rep_\Qbar(G))} \QQ  \to K_0(_KS(G,X))_{\QQ}$$  induced by the functor  $K_0(r^{-1})$  is injective. This is true, as follows from Proposition~\ref{isomQ} and point 3) of Proposition \ref{KhX}.
\end{proof}

  \begin{rmks} \label{rmks:b}
  
  1) The only instance for which one knows that Conjecture~\ref{mainconj} is true is when $_KS(G,X)$ is the Siegel domain $\CA_g$ and the representation of $G=Sp(2g)$ is the tautological one (see \cite[Thm.~1.1]{EV02}, \cite[Thm.~1.1]{vdG99}). Then the flat vector bundle $[\CE]$ on $\CA_g$ is the Gau{\ss}-Manin bundle of the relative de Rham cohomology  $H^1$ of the universal abelian scheme. The family is defined only over a level structure,  but the Gau{\ss}-Manin bundle, together with its Gau{\ss}-Manin connection, descends to $\CA_g$.  In this case the vanishing is even stronger: on the finite cover over which the local monodromies are unipotent, the Deligne extension of the Gau{\ss}-Manin bundle has vanishing Chern classes in the rational Chow groups.

 \medskip
\noindent  
2)  To tie up with the conjecture on flat bundles alluded to in the introduction, we  remark that the automorphic vector bundles  $[\mathcal{E}]_K$ in the conjecture are those coming from ${\rm Rep}_{\bar \QQ}(G)$ which have a $G(\mathbf{A}_f)$-equivariant integrable connection 
\cite[Lemma~3.6]{Har85}.

 \medskip
\noindent 
  3)   
  Let $c_{\mathcal{D}}: CH^*(_KS(G, X))_{\QQ} \to H^{2*}_{\mathcal{D}}(_KS(G, X),*)$ be  the cycle homomorphism into Deligne cohomology \cite[Section~7]{EV88}.  We prove in Theorem~\ref{thm:vanD}  that
   $$c_{\mathcal{D}}\circ c^{>0}_K|_{ \Rep_{\bar \QQ}(G) }=0.$$
  Using Claim~\ref{claim:eq} in addition, one sees that
   Conjecture~\ref{mainconj}  when $_KS(G, X)$ is projective  is a special case of Beilinson's motivic conjecture discussed in the introduction.

  \medskip
  \noindent
  4)  We are not able to prove Conjecture~\ref{mainconj}. In fact, apart from the  example mentioned in 1), we can not prove vanishing in any other example. 
  
  \medskip
  \noindent
  Let us denote by $_KS(G,X)_{E(G,X)}$ the model of the Shimura variety over its reflex field. 
 If $$c_{\ell}:    CH^*(_KS(G, X)_{E(G,X)})_{\QQ} \to H^{2*}_{\rm cont}(_KS(G, X)_{E(G,X)}, \QQ_\ell(*))$$  denotes the cycle homomorphism to continuous $\ell$-adic cohomology \cite[Section~2]{Jan87}, we prove
   $$c_{\ell}\circ c^{>0}_K|_{ \Rep_{\bar \QQ}(G) }=0.$$
    in  Theorem~\ref{thm:vanl}.
(More precisely, we prove this after replacing $E(G,X)$ by the finite extension $E'_h$ of Lemma \ref{rational}.)  In particular, we verify Conjecture~\ref{conj:sasha} in this case. To our knowledge, the examples treated in this note are the first that confirm this prediction for cycle classes of flat  bundles 
that do not depend on knowing in advance that the Chow class  itself vanishes.   
   
  \end{rmks}

 \subsection{Hecke operators} 
 
 Fix $K \subset G(\af)$.  Let $g \in G(\af)$ 
 and consider $K_g = K \cap gKg^{-1} \subset K$.  Let 
 $$\pi_{1,g} = \pi_{K,K_g}:  _{K_g}S(G,X)  \ra _KS(G,X),$$
 defined as above.  Right multiplication by $g$ defines an isomorphism 
 $$r_g:  _{gKg^{-1}}S(G,X)  \isoarrow _KS(G,X).$$
 Let
 $$\pi_{2,g} = r_g\circ \pi_{gKg^{-1},K_g}:  _{K_g}S(G,X)  \ra _KS(G,X).$$
 We first observe that 
 \ga{TgE}{ T(g)[\CE]_K \cong\oplus_1^{ [K:K_g]}[\CE]_K}
 as vector bundles, 
 where $T(g)= \pi_{2,g*}\circ \pi^*_{1,g}$. Indeed, the definition implies $r_g^* [\CE]_{K}\cong [\CE]_{g^{-1}Kg}$, and  formula~\eqref{changegroup} implies formula~\eqref{TgE}.

Both $\pi_{1,g}$ an $\pi_{2,g}$ are finite \'etale morphisms. 
 So for any contravariant  cohomology theory $H$ which has  push-downs for proper (or even only finite \'etale) morphisms, one can define 
 the {\it Hecke operator}
 \ga{tg}{   T(g): H (_KS(G,X))  \xrightarrow{ \pi_{2,g*}\circ \pi_{1,g}^*} H(_KS(G,X)).}

 We shall use the Hecke operators on Chow groups, on continuous $\ell$-adic cohomology, on Deligne cohomology, on syntomic cohomology.  All of them are considered rationally.  In particular, they are (possibly infinite) dimensional vector spaces over $\QQ$, and the Hecke algebra splits those cohomologies as a sum of generalized eigenspaces.   
 
 \medskip
 
 A rational prime number $q$ is {\it unramified} for $K$ if there exists a $K_q \subset K$ with $K_q \subset \cap G(\QQ_q)$ a hyperspecial compact open subgroup, and {\it ramified} otherwise.  There is a finite set $S(K)$ of ramified primes.    We let $\CH_K$ denote the $\QQ$-subalgebra of  the ring tensor $\QQ$ of correspondences  generated   by the $T(g)$,  where $g$ runs through elements of $G(\QQ_q)$ with $q \notin S(K)$; this is well-known to be a commutative algebra.   

 The following is obvious:
 
  \begin{lem}  Let $R$ be a ring, $a \in R$, and let $[a]:  _KS(G,X) \ra R$ be the constant function with value $a$.  Then $[a]$ is an eigenfunction for every $T(g)$, with eigenvalue $v(K,g) = [K:K_g]$.
 \end{lem}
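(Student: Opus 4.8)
The plan is a formal computation straight from the definition $T(g) = \pi_{2,g*}\circ\pi_{1,g}^*$, using only that $\pi_{1,g}$ and $\pi_{2,g}$ are finite \'etale. Write $1$ for the unit of $H({}_KS(G,X))$, so that $[a] = a\cdot 1$. Pullback is a ring homomorphism, so $\pi_{1,g}^*[a] = a\cdot\pi_{1,g}^*(1) = a\cdot 1_{{}_{K_g}S(G,X)}$. For a finite \'etale morphism $f$ of degree $d$ and any contravariant theory with push-downs for finite \'etale morphisms one has the trace relation $f_*\circ f^* = d$; applied to the unit this gives $f_*(1) = d\cdot 1$, so $\pi_{2,g*}\bigl(a\cdot 1_{{}_{K_g}S(G,X)}\bigr) = a\cdot(\deg\pi_{2,g})\cdot 1_{{}_KS(G,X)}$. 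Hence $T(g)[a] = (\deg\pi_{2,g})\cdot[a]$, which already shows $[a]$ is an eigenfunction for every $T(g)$.

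It then remains to identify $\deg\pi_{2,g}$ with $v(K,g) = [K:K_g]$. Since $r_g$ is an isomorphism, $\deg\pi_{2,g} = \deg\pi_{gKg^{-1},K_g} = [gKg^{-1}:K_g]$. Conjugation by $g$ identifies the pair $\bigl(K,\, K\cap g^{-1}Kg\bigr)$ with $\bigl(gKg^{-1},\, gKg^{-1}\cap K\bigr) = (gKg^{-1}, K_g)$, so $[gKg^{-1}:K_g] = [K:K\cap g^{-1}Kg]$, and this equals $[K:K\cap gKg^{-1}] = [K:K_g]$ because the double coset $KgK$ has equally many left and right $K$-cosets ($K$ compact open in the unimodular group $G(\af)$). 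Alternatively, this equality of degrees is already recorded by \eqref{TgE}: taking there $\CE$ to be the trivial rank-one $G$-equivariant bundle, for which $[\CE]_K = \CO_{{}_KS(G,X)}$, one reads off $\pi_{2,g*}\CO = \CO^{\oplus[K:K_g]}$, a locally free sheaf of rank $\deg\pi_{2,g}$, hence $\deg\pi_{2,g} = [K:K_g]$.

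There is no genuine obstacle here; the statement is, as asserted, immediate. The only point that is not purely mechanical is the coincidence $\deg\pi_{2,g} = [K:K_g]$, i.e. the classical fact that a compact open subgroup of a unimodular locally compact group has double cosets with the same number of cosets on each side — and even this is subsumed by the already-established formula \eqref{TgE}.
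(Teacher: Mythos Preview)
Your argument is correct, and in fact more than the paper provides: the paper simply declares the lemma ``obvious'' and gives no proof at all. Your unpacking via $T(g)[a] = (\deg\pi_{2,g})\cdot[a]$ is exactly the right computation, and you correctly isolate the one point that is not purely formal, namely that $\deg\pi_{2,g} = [gKg^{-1}:K_g]$ must be identified with $[K:K_g] = \deg\pi_{1,g}$; your two justifications (unimodularity of $G(\af)$, or reading it off from \eqref{TgE} applied to the trivial line bundle) are both valid. A minor stylistic remark: the lemma as stated is literally about $R$-valued constant functions rather than the unit of a cohomology ring, so the most direct phrasing is $(T(g)[a])(x) = \sum_{y\in\pi_{2,g}^{-1}(x)} a = (\deg\pi_{2,g})\cdot a$; but your cohomological formulation is equivalent and is in any case the version that gets used downstream.
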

 
 For any cohomology theory $H^*(_KS(G,X))$, including the Chow groups, let $H^*(_KS(G,X))_v \subset H^*(_KS(G,X))$ be the  
 eigenspace for the {\it volume character} $T(g) \mapsto v(K,g)$ of $\CH_K$. As $H^*(_KS(G,X))$
 is a possibly infinite dimensional $\QQ$-vector space, so is $H^*(_KS(G,X))_v$.

 \begin{lem}  Let $\CE \in \Vect_G(\hX)$.  Then for any open compact subgroup $K \subset G(\af)$, $ch([\CE]_K) \in CH(_KS(G,X))_v$.
 \end{lem}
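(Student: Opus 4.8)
The plan is to reduce the statement to the compatibility between the functor $[\,\cdot\,]_K$ and the Hecke operators $T(g)$ that was already recorded in \eqref{TgE}, combined with the multiplicativity of the Chern character. First I would observe that $T(g)$ acts on $CH(_KS(G,X))_{\QQ}$ as a ring endomorphism (push-forward along the finite \'etale $\pi_{2,g}$ of a pullback along the finite \'etale $\pi_{1,g}$, with the projection formula making it respect the ring structure up to the factor coming from the degree). Since $ch$ is additive and multiplicative, it suffices to compute $T(g)\bigl(ch([\CE]_K)\bigr)$. By functoriality of the Chern character under pullback and by Grothendieck--Riemann--Roch (or simply the fact that $\pi_{2,g}$ is \'etale, so there is no Todd class correction) for push-forward along a finite \'etale map, we have
\[
T(g)\bigl(ch([\CE]_K)\bigr) = \pi_{2,g,*}\,\pi_{1,g}^*\,ch([\CE]_K) = \pi_{2,g,*}\,ch\bigl(\pi_{1,g}^*[\CE]_K\bigr) = ch\bigl(\pi_{2,g,*}\,\pi_{1,g}^*[\CE]_K\bigr) = ch\bigl(T(g)[\CE]_K\bigr).
\]

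Now I would invoke \eqref{TgE}, which gives an isomorphism of vector bundles $T(g)[\CE]_K \cong \bigoplus_1^{[K:K_g]}[\CE]_K$. Applying $ch$ and using additivity yields $ch\bigl(T(g)[\CE]_K\bigr) = [K:K_g]\cdot ch([\CE]_K) = v(K,g)\cdot ch([\CE]_K)$. Comparing with the previous display, $T(g)\bigl(ch([\CE]_K)\bigr) = v(K,g)\cdot ch([\CE]_K)$ for every $g$ in $G(\QQ_q)$ with $q \notin S(K)$, i.e. $ch([\CE]_K)$ lies in the eigenspace for the volume character. Hence $ch([\CE]_K) \in CH(_KS(G,X))_v$.

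The one point that requires a little care — and the only candidate for a genuine obstacle — is checking that pull-back and push-forward of Chern characters along the \emph{finite \'etale} morphisms $\pi_{1,g}$ and $\pi_{2,g}$ behave as claimed: pull-back compatibility $\pi_{1,g}^*\circ ch = ch\circ \pi_{1,g}^*$ is standard, and for the push-forward one uses that a finite \'etale morphism has trivial relative tangent bundle, so the Todd class in Grothendieck--Riemann--Roch is $1$ and $\pi_{2,g,*}\circ ch = ch\circ \pi_{2,g,*}$ on $K_0$. With that in hand the argument is entirely formal. (The same computation, verbatim, works for any of the cohomology theories $H^*$ in which $T(g)$ is defined via the same $\pi_{2,g,*}\circ \pi_{1,g}^*$ and in which Chern classes are functorial for pull-back and push-forward along finite \'etale maps; this is why the analogous statement will hold in continuous $\ell$-adic and in Deligne cohomology as well.)
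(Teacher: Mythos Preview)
Your proof is correct and follows essentially the same route as the paper's: use that $\pi_{2,g}$ is finite \'etale so Grothendieck--Riemann--Roch has trivial Todd correction, commute $ch$ past $\pi_{2,g,*}\pi_{1,g}^*$, and then invoke \eqref{TgE} together with additivity of $ch$. (Your aside that $T(g)$ acts as a ring endomorphism of $CH$ is neither accurate nor needed---push-forward along a finite map is not multiplicative---but it plays no role in the actual argument.)
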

 \begin{proof}  
 As $\pi_{2,g}$ is finite \'etale,  its Todd class is equal to $1$, thus the Grothendieck-Riemann-Roch theorem implies 
 \ga{}{ ch(\pi_{2,g*} \pi_{1,g}^*[ \CE]_K)=  \pi_{2,g*}(ch( \pi_{1,g}^*[\CE]_K)\cdot 1), \notag}
 from which we conclude using formula~\eqref{TgE}
\ml{}{ ch(T(g) [\CE]_K)= \pi_{2,g*}(ch( \pi_{1,g}^*[\CE]_K)\cdot 1)=  \pi_{2,g*}\pi_{1,g}^*ch([\CE]_K) =T(g)ch([\CE]_K).\notag}
 Applying   \eqref{changegroup} and \eqref{push} one concludes 
 \ga{tgch}{ [K:K_g] ch([\CE]_K)=   T(g)ch([\CE]_K)   . } 
 \end{proof}
 
 \begin{cor}  \label{lem:volcoh}  Let $H^*$ be a cohomology theory which,
for  any open compact subgroup $K \subset G(\af)$, 
 admits 
 a cycle map $$c^K_H: CH(_KS(G,X))\to H(_KS(G,X))$$ which commutes with the action of $\CH_K$. 
 Then for any $\CE \in \Vect_G(\hX)$,  $$c_H^K\circ ch([\CE]_K) \in H^*(_KS(G,X))_v.$$
 \end{cor}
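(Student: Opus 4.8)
The statement to prove is Corollary~\ref{lem:volcoh}: if $H^*$ is a cohomology theory equipped, for every open compact $K \subset G(\af)$, with a cycle map $c^K_H : CH(_KS(G,X)) \to H(_KS(G,X))$ commuting with the Hecke action of $\CH_K$, then for every $\CE \in \Vect_G(\hX)$ one has $c_H^K\circ ch([\CE]_K) \in H^*(_KS(G,X))_v$.

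The plan is to reduce everything directly to the previous Lemma, which gives $[K:K_g]\, ch([\CE]_K) = T(g)\, ch([\CE]_K)$ in $CH(_KS(G,X))_{\QQ}$ for all $g \in G(\af)$ with $q \notin S(K)$ — that is, $ch([\CE]_K) \in CH(_KS(G,X))_v$. First I would recall that the volume eigenspace $H^*(_KS(G,X))_v$ is by definition the subspace on which every Hecke operator $T(g)$ (for $g$ running over $G(\QQ_q)$, $q \notin S(K)$) acts by the scalar $v(K,g) = [K:K_g]$. So it suffices to check that the element $\xi := c_H^K(ch([\CE]_K))$ satisfies $T(g)\,\xi = [K:K_g]\,\xi$ for every such $g$.

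The key step is the hypothesis that $c_H^K$ commutes with the $\CH_K$-action: applying $T(g)$ to $\xi$ and using this compatibility gives
\ga{}{ T(g)\,\xi = T(g)\, c_H^K(ch([\CE]_K)) = c_H^K\bigl( T(g)\, ch([\CE]_K)\bigr). \notag}
Now substitute the identity from the preceding Lemma, namely $T(g)\, ch([\CE]_K) = [K:K_g]\, ch([\CE]_K)$ in the Chow group, and use $\QQ$-linearity of $c_H^K$ to conclude $T(g)\,\xi = [K:K_g]\, c_H^K(ch([\CE]_K)) = [K:K_g]\,\xi$. Since this holds for all admissible $g$, the element $\xi$ lies in the volume eigenspace $H^*(_KS(G,X))_v$, which is exactly the claim.

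There is essentially no obstacle here; this is a formal consequence of two inputs already in hand — the $\CH_K$-equivariance of the cycle class map (an assumption in the statement) and the computation $ch([\CE]_K) \in CH(_KS(G,X))_v$ from the previous Lemma. The only point that deserves a word is that ``commutes with the action of $\CH_K$'' should be interpreted as: for each $g$ with $q\notin S(K)$, the operator $T(g)$ on $H(_KS(G,X))$ (defined in \eqref{tg}) and the operator $T(g)$ on $CH(_KS(G,X))$ satisfy $c_H^K \circ T(g) = T(g) \circ c_H^K$; this is automatic whenever the cycle map is natural for pullback along finite étale maps and compatible with proper pushforward, which is exactly what all the relevant theories (continuous $\ell$-adic, Deligne, syntomic) satisfy. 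Hence the displayed chain of equalities is valid verbatim and the corollary follows.
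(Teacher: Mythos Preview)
Your argument is correct and is precisely the intended one: the corollary is an immediate consequence of the preceding Lemma (giving $ch([\CE]_K)\in CH(_KS(G,X))_v$) together with the assumed $\CH_K$-equivariance of the cycle map, which is why the paper states it without a separate proof.
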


The cohomology theories $H^i(-, j)$  with coefficients in a characteristic $0$ field $F$ considered in this note are  all functorial. Thus to check that the cycle map commutes with the action of $\CH_K$, it suffices to verify  compatibility with the push-down via $\pi_{2,g*}$.
In all those cohomology theories, the cycle map can be defined via purity: for $Z=\sum_i m_i Z_i$ a codimension $n$ cycle on a smooth $Y$, where $m_i\in \ZZ$ and the codimension $n$ cycles $Z_i$ are prime, the Gysin morphism exists and is an isomorphism
\ga{}{\gamma: \oplus  K\cdot [Z_i] \xrightarrow{\cong} H_Z^{2n}(Y, n). \notag}
See \cite[Section~7]{EV88} for Betti, de Rham and Deligne cohomology, 
\cite[Thm.~3.23]{Jan88} for continuous $\ell$-adic cohomology. Thus compatibility reduces to showing 
\ml{}{ \gamma(\pi_*[Z_i])=\gamma({\rm deg}(k(Z_i)/k(\pi(Z_i)))\cdot [\pi(Z_i)])= \\ \pi_* \gamma([Z_i])
\in H^{2n}_{p(Z_i)} (Y', n) \notag} for a finite  surjective morphism $p: Y\to Y'$.  In Deligne cohomology, it follows as the cohomology verifies the Bloch-Ogus axioms. In $\ell$-adic cohomology, for lack of reference, we restrict ourselves to the case where  $Y$ is defined over a number field $k$.
Let $\mathcal{Y}$ be a flat smooth model over a non-trivial open $U$ in the spectrum of the ring of integers of $k$. Then the cycle class in $H^{2n}(\mathcal{Y}, n)$ is just the standard cycle class from \cite{SGA4.5}, for which the trace properties are known  \cite[2.3]{SGA4.5}.

\begin{cor} \label{lem:volcoh2}
Corollary~\ref{lem:volcoh} holds true for Deligne cohomology and continuous $\ell$-adic cohomology.

\end{cor}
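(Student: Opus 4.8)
The plan is to check the hypothesis of Corollary~\ref{lem:volcoh} for the two theories $H^{*}=H^{2*}_{\mathcal{D}}(-,*)$ and $H^{*}=H^{2*}_{\rm cont}(-,\QQ_\ell(*))$; the corollary then applies word for word. I would begin by observing that both theories are contravariant and functorial, so the pull-back half of each Hecke operator $T(g)=\pi_{2,g*}\circ\pi_{1,g}^{*}$ is automatically compatible with the cycle map. Since $\pi_{1,g}$ and $\pi_{2,g}$ are finite \'etale, it remains only, exactly as in the discussion preceding the corollary, to verify that the cycle map commutes with the proper push-forward $\pi_{2,g*}$.

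Next I would reduce this to a single trace identity via purity. For a smooth variety $Y$ and a codimension-$n$ cycle $Z=\sum_i m_i Z_i$ with the $Z_i$ prime, the cycle class is the image of $(m_i)_i$ under the Gysin isomorphism $\gamma:\bigoplus_i F\cdot[Z_i]\xrightarrow{\;\cong\;}H^{2n}_Z(Y,n)$. So for a finite surjective morphism $p:Y\to Y'$ of smooth varieties, compatibility of the cycle map with $p_{*}$ comes down to
\[
\gamma\bigl(\deg(k(Z_i)/k(p(Z_i)))\cdot[p(Z_i)]\bigr)=p_{*}\,\gamma([Z_i])\qquad\text{in }\ H^{2n}_{p(Z_i)}(Y',n),
\]
which is the normalization/trace compatibility for the Gysin morphism.

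For Deligne cohomology this is immediate: the theory satisfies the Bloch--Ogus axioms and is therefore equipped with Gysin and trace morphisms for finite morphisms obeying the displayed identity, cf.\ \cite[Section~7]{EV88}. For continuous $\ell$-adic cohomology I would use that $_KS(G,X)$ together with its Hecke correspondences is defined over the reflex field, hence --- after enlarging it to $E'_h$ as in Lemma~\ref{rational} if necessary --- over a number field $k$; spreading out, pick a smooth flat model $\mathcal{Y}$ over a non-trivial open $U\subset{\rm Spec}(\CO_k)$ on which $\pi_{1,g}$ and $\pi_{2,g}$ extend to finite \'etale morphisms of smooth $U$-schemes. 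On $\mathcal{Y}$ the pertinent cycle class is the standard one of \cite{SGA4.5}, whose trace property is recorded in \cite[2.3]{SGA4.5}; taking the inductive limit over such $U$ recovers the cycle map into $H^{2*}_{\rm cont}$ of \cite[Section~2]{Jan87}, and the push-forward $\pi_{2,g*}$ on $H^{2*}_{\rm cont}$ is induced, compatibly with this limit, from the trace maps on the \'etale cohomology of the models, so the identity passes to the limit.

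The hard part will be this last $\ell$-adic step: one has to make sure that the finite \'etale Hecke correspondences genuinely extend to finite \'etale morphisms over some open $U$ in $\mathrm{Spec}(\CO_k)$, and that the push-forward in Jannsen's \emph{continuous} $\ell$-adic cohomology --- which is built from a limit and the attached Hochschild--Serre formalism rather than being plain \'etale cohomology of $Y_{\bar k}$ with its Galois action --- is compatible, through that limit, with the ordinary trace maps on the integral models. Given the trace properties of \cite{SGA4.5} and the functoriality of Jannsen's construction, I expect this to be bookkeeping rather than a genuinely new input.
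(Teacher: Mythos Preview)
Your proposal is correct and follows essentially the same route as the paper: reduce to compatibility of the cycle map with $\pi_{2,g*}$, use purity to convert this into the trace identity for the Gysin isomorphism, then invoke the Bloch--Ogus axioms for Deligne cohomology and, for continuous $\ell$-adic cohomology, spread out over an open in $\mathrm{Spec}(\CO_k)$ and appeal to the trace properties of the standard cycle class in \cite{SGA4.5}. The only addition worth noting is that the paper also cites \cite[Thm.~3.23]{Jan88} for the Gysin isomorphism in continuous $\ell$-adic cohomology; otherwise your argument and the paper's coincide.
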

 In the next section we study $H^*(_KS(G,X))_v$ for Betti and $\ell$-adic cohomology. 
 
  \subsection{Chern classes in cohomology}
  
  Henceforward we assume  the Shimura variety $_KS(G,X)$ to be  {\it projective}; equivalently, the derived subgroup $G^{{\rm der}}$ of $G$ is anisotropic over $\QQ$.  In the non-compact case the automorphic theory naturally gives information about Chern classes of canonical extensions on toroidal compactifications on the one hand; on the other hand, the $v$-eigenspace most naturally appears in intersection cohomology of the minimal compactification.  This has been worked out in detail by Goresky and Pardon in \cite{GP02}, and we expect 
  to study the
 analogous questions for Chow groups in a second paper.  
 
 Although the point of the Goresky-Pardon paper is to study non-compact Shimura varieties, it still contains a convenient reference for our purposes.  The following statement is well known.
 
 \begin{prop}\label{isomQ}   Assume the derived subgroup $G^{{\rm der}}$ of $G$ is anisotropic, so that $_KS(G,X)$ is projective.   There is a canonical isomorphism of algebras
 $$H^*(\hX,\QQ) \isoarrow H^*(_KS(G,X),\QQ)_v.$$
 
 \end{prop}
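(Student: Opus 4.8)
The plan is to construct the isomorphism $H^*(\hX,\QQ) \isoarrow H^*(_KS(G,X),\QQ)_v$ by combining the transcendental (analytic) description of $_KS(G,X)$ with the Matsushima–Murakami formula for the cohomology of compact locally symmetric spaces, and then to identify the volume eigenspace with the contribution of the trivial representation. Write $_KS(G,X)(\CC) = G(\QQ)\backslash X\times G(\af)/K$, a finite disjoint union of compact locally symmetric spaces $\Gamma_i\backslash X^+$, where $X^+$ is a connected component of $X$ (a hermitian symmetric domain) and $\Gamma_i$ are arithmetic subgroups of $G(\RR)$. Since $G^{\rm der}$ is anisotropic over $\QQ$, each $\Gamma_i$ is cocompact, so $\Gamma_i\backslash X^+$ is a compact Kähler manifold. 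The first step is to recall the Matsushima formula (or its adelic incarnation, as in \cite{BorelWallach} or \cite{GP02}): $H^*(_KS(G,X),\CC)$ decomposes as a direct sum over automorphic representations $\pi \simeq \pi_\infty\otimes\pi_f$ occurring in $L^2(G(\QQ)\backslash G(\ad))$, with the $\pi$-isotypic piece isomorphic to $H^*(\fg,K;\pi_\infty)\otimes \pi_f^K$. The Hecke algebra $\CH_K$ acts on the $\pi$-isotypic summand through the character by which it acts on $\pi_f^K$, so distinct eigenspaces for $\CH_K$ correspond to distinct near-equivalence classes of $\pi_f$.

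The second step is to identify the volume eigenspace. The character $T(g)\mapsto v(K,g)=[K:K_g]$ is precisely the character by which $\CH_K$ acts on $(\mathbf{1}_f)^K$, where $\mathbf{1}_f$ is the trivial representation of $G(\af)$; indeed $T(g)$ acts on $(\mathbf{1}_f)^K = \CC$ by the index $[K:K_g]$. By strong multiplicity one considerations at the unramified places (the classification of automorphic representations of $G$ with the same Satake parameters almost everywhere as the trivial representation — here one uses that $G^{\rm der}$ is anisotropic so the relevant automorphic spectrum is discrete and the argument of e.g. \cite[\S 4]{GP02} applies), the representations $\pi$ contributing to $H^*(_KS(G,X),\QQ)_v$ are exactly those with $\pi_f$ one-dimensional, i.e. $\pi_\infty$ is one-dimensional (a character of $G(\RR)$ that is trivial on $G^{\rm der}(\RR)$) and $\pi_f$ is a finite-order character factoring through the (finite, by the anisotropy hypothesis and the axiom on the center) group of connected components. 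After twisting, the relevant $(\fg,K)$-cohomology is that of the trivial module, so $H^*(_KS(G,X),\QQ)_v \cong H^*(\fg,K;\CC)\otimes(\text{constants}) = H^*(\mathfrak{g}_u, K; \CC)$ where $\fg_u$ is the compact form — and this is exactly $H^*(\hX,\QQ)\otimes\CC$, since $\hX = G_u/K_u$ is the compact dual of $X^+$ and its de Rham cohomology is computed by $(\fg_u,K_u)$-relative Lie algebra cohomology (invariant forms; $\hX$ being a rational homogeneous variety has cohomology generated by algebraic cycles and carries no higher cohomology of $\CH_K$-nontrivial type). One must be slightly careful about the number of connected components of $_KS(G,X)$ versus $\hX$: the Borel embedding $\beta$ realizes each $X^+$ inside the connected $\hX$, and the volume eigenspace isolates the "constant across components" part, giving one copy of $H^*(\hX,\QQ)$.

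The third step is to check that this identification is an isomorphism of algebras and is canonical. Canonicity: the map is induced by the fact that $G$-equivariant (hence flat) bundles on $\hX$ pull back along $\beta$ to $\Gamma_i$-invariant flat bundles on $X^+$ and thus descend to $_KS(G,X)$; concretely, a harmonic differential form on $\hX$ representing a cohomology class is $G_u$-invariant, hence restricts to a $G(\RR)$-invariant form on $X^+$, which descends to each $\Gamma_i\backslash X^+$, and this restriction-and-descent map is the Matsushima projection to the trivial-$\pi_\infty$ summand. The ring structure is preserved because cup product corresponds to wedge product of invariant forms on both sides. The main obstacle I anticipate is not the construction but making the multiplicity-one / spectral-separation argument precise: one needs to know that no automorphic representation $\pi$ other than the (twists of the) trivial one has the same Hecke eigenvalues $v(K,g)$ at all unramified $g$, and that the corresponding $(\fg,K)$-cohomology contributes nothing extra. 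For this one invokes the structure of $L^2(G(\QQ)\backslash G(\ad))$ in the anisotropic case together with the fact that the generalized Ramanujan/purity bounds rule out non-tempered non-trivial $\pi_\infty$ from sharing the trivial Satake parameters — or, more simply and self-containedly, one cites \cite{GP02}, where exactly this statement (the $v$-eigenspace in intersection cohomology, which in the compact case is ordinary cohomology, equals $H^*(\hX)$) is proved. Given the excerpt's explicit remark that "the following statement is well known" and that Goresky–Pardon "contains a convenient reference," I would present the proof as the Matsushima-formula reduction above and defer the spectral-separation input to \cite{GP02}.
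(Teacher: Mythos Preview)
Your approach is essentially the paper's: Matsushima's formula, identification of the volume eigenspace with the contribution of the trivial automorphic representation, and the identification $H^*(\fg,K_h;\CC)\cong H^*(\hX,\CC)$.  Two points are worth flagging.

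First, the spectral-separation step is simpler than you fear.  No Ramanujan-type bounds or strong multiplicity one are needed.  At each prime $q$ unramified for $K$, the space $\pi_f^K$ is nonzero only if $\pi_q$ is spherical, and the only spherical representation of $G(\QQ_q)$ whose spherical vector is an eigenvector for the \emph{local} volume character is the trivial representation (this is an elementary fact about the Satake isomorphism).  Hence $\pi_q$ is trivial for all $q\notin S(K)$, and then weak approximation forces $\pi$ to be the trivial representation of $G(\ad)$.  This disposes of your worry about ``twists'' by characters of the component group as well.

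Second, you have a genuine gap in passing from $\CC$ to $\QQ$.  Your argument produces an isomorphism $H^*(\hX,\CC)\isoarrow H^*(_KS(G,X),\CC)_v$ via invariant forms, but the proposition asserts an isomorphism over $\QQ$, and ``restriction of $G_u$-invariant forms'' has no evident $\QQ$-structure.  The paper supplies this by a separate argument: once one knows the odd-degree $v$-eigenspace vanishes, one exhibits the map in each even degree $2m$ as the Chern-character map from $K_\QQ(\Rep_{\Qbar}(K_h))\otimes_{K_\QQ(\Rep_{\Qbar}(G))}\QQ$, which surjects onto $H^{2m}(\hX,\QQ)(m)$ by Proposition~\ref{KhX} and lands in $H^{2m}(_KS(G,X),\QQ)(m)_v$ via $\CE\mapsto[\CE]_K$.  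You should add this step (or an equivalent one) to justify the rational statement.
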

 \begin{proof}  We first show the corresponding statement over $\CC$; thus we can compute $H^*(_KS(G,X),\CC)$ using automorphic forms and Matsushima's  formula.  Say
 the space $\CA(G)$ of automorphic forms on $G(\QQ)\backslash G(\ad)$ decomposes as the direct sum
 $$\CA(G) = \oplus_{\pi} m(\pi)\pi$$
 where $\pi$ runs over irreducible admissible representations of $G(\ad)$ and $m(\pi)$ is a non-negative integer, which is positive for a countable set of $\pi$.  Then
 $$H^i(_KS(G,X),\CC) \isoarrow \oplus_{\pi} m(\pi)H^i(\fg,K_h; \pi_{\infty})\otimes \pi_f^{K}.$$
 Then 
 $$H^i(_KS(G,X),\CC)_v \isoarrow \oplus_{\pi} m(\pi)H^i(\fg,K_h; \pi_{\infty})\otimes (\pi_f^{K})_v,$$
 where $(\pi_f^{K})_v$ is the eigenspace in $\pi_f^K$ for the volume character  of $\CH_K$.  Write $\pi_f = \otimes'_q \pi_q$, where $q$ runs over rational primes.  Now if $q$ is unramified for $K$ then $\pi_f^K = 0$ unless $\pi_q$ is spherical; but the only spherical representation of $G(\QQ_q)$ whose spherical subspace is an eigenspace for the (local) volume character is the  {\it trivial} representation of $G(\QQ_q)$.  Thus   $(\pi_f^{K})_v = 0$ implies $\pi_q$ is trivial representation for all $q$ that are unramified for $K$.  It then follows from weak approximation that $\pi$ is in fact the trivial representation.  Thus for all $i$,
 \begin{equation}\label{HHv} H^i(_KS(G,X),\CC)_v \isoarrow H^i(\fg,K_h; \CC).\end{equation}
 But this is equal to $H^i(\hX,\CC)$ by a standard calculation; see \cite[Rmk.~16.6]{GP02}.
 
 In particular, $H^i(_KS(G,X),\QQ)_v = 0$ if $i$ is odd.  Now to prove that there is an isomorphism over $\QQ$, it suffices to show that, for each $m$,
\begin{equation}\label{twist} H^{2m}(\hX,\QQ)(m) \isoarrow H^{2m}(_KS(G,X),\QQ)(m)_v,\end{equation}
where we write  $$H^{2m}(\hX,\QQ)(m) = H^{2m}(\hX,  (2\pi i)^m\cdot \QQ) \subset H^{2m}(\hX,\CC).$$  But composing the isomorphism of Proposition \ref{KhX} with the Chern class in cohomology, we obtain an isomorphism \cite{Mar76},  \cite{GP02} 
\begin{equation}\label{HHX} K_{\QQ}(\Rep_\Qbar(K_h))\otimes_{K_{\QQ}(\Rep_\Qbar(G))} \QQ \isoarrow H^{2m}(\hX,\QQ)(m). \end{equation}
Then \eqref{twist} follows from the diagram in \cite[Rmk.~16.6]{GP02}, where of course we are replacing intersection cohomology with ordinary cohomology.
 \end{proof}
    
  
The proof of Proposition \ref{isomQ} gives additional information on the Galois action on $\ell$-adic cohomology. The Shimura variety $_KS(G,X)$ and the flag variety $\hX$ have canonical models over the {\it reflex field} $E(G,X)$.   Theorem~\cite[Thm.~4.8]{Har85} asserts, among other things,  that the functor of \eqref{autovb} commutes with the action of $\Gal(\Qbar/E(G,X))$.  In addition, the Hecke correspondences $T(g)$ are all rational over $E(G,X)$.  It follows that, for any prime $\ell$, the $\ell$-adic cohomology spaces $H^*(\hX_{\bar \QQ},\Ql)$ and $H^*(_KS(G,X)_{\bar \QQ},\Ql)_v$ carry an action of $\Gal(\Qbar/E(G,X))$, where we denote by $_{\bar \QQ}$ the base change of the models of the Shimura variety and of the flag variety  over the reflex field $E(G, X)$. 

Moreover, for any $\CE \in \Vect_G(\hX)$, the $i$-th Chern class
\ga{}{ c^i_\ell(\CE) \in H^{2i}(\hX_{\bar \QQ},\Ql (i)),  \ {\rm resp.} \   c^i_\ell([\CE]_K) \in H^{2i}(_KS(G,X)_{\bar \QQ},\Ql(i))_v \notag}
 generates a $\Gal(\Qbar/E(G,X))$-subspace that is isotypic for the $i$-th power of the cyclotomic character. 

\begin{prop}  \label{prop:corr} Under the hypotheses of Proposition \ref{isomQ}, the algebra isomorphism induces an algebra  isomorphism 
\ga{}{H^{2*}(\hX_{\bar \QQ},\Ql(*)) \isoarrow H^{2*}(_KS(G,X)_{\bar \QQ} ,\Ql(*))_v \notag \\
c^i_\ell(\CE) \mapsto c^i_\ell([\CE]_K) \notag
}
in $\ell$-adic cohomology over $\bar \QQ$, which  is equivariant for the action of the Galois group $\Gal(\Qbar/E(G,X))$ on both sides.
\end{prop}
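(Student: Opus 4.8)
The plan is to deduce the statement from the Betti version already proved in Proposition~\ref{isomQ} by base change along the comparison isomorphisms, and then to obtain Galois equivariance by reducing it to the effect of the isomorphism on Chern classes. First I would fix an embedding $E(G,X)\hookrightarrow\CC$ and use the comparison isomorphism between singular and \'etale $\ell$-adic cohomology to identify $H^*(\hX_{\Qbar},\Ql)$ with $H^*(\hX(\CC),\QQ)\otimes_\QQ\Ql$ and $H^*(_KS(G,X)_{\Qbar},\Ql)$ with $H^*(_KS(G,X)(\CC),\QQ)\otimes_\QQ\Ql$, as graded $\Ql$-algebras; under the second identification Tate twists become twists by powers of $2\pi i$, and the $\CH_K$-eigenspace decomposition is preserved because the Hecke correspondences $T(g)$ are algebraic and rational over $E(G,X)$. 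Since the odd cohomology vanishes on both sides (proof of Proposition~\ref{isomQ}), tensoring the algebra isomorphism of Proposition~\ref{isomQ} with $\Ql$ then produces an isomorphism of graded $\Ql$-algebras $\phi\colon H^{2*}(\hX_{\Qbar},\Ql(*))\isoarrow H^{2*}(_KS(G,X)_{\Qbar},\Ql(*))_v$.

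Next I would unwind the construction of that isomorphism as it appears in the proof of Proposition~\ref{isomQ}: it is the composite of \eqref{HHX} --- the Chern character of $G$-equivariant bundles precomposed with $r^{-1}$ --- with the companion map into $H^{2*}(_KS(G,X),\QQ)(*)_v$ furnished by the diagram of \cite[Rmk.~16.6]{GP02}, that is, $W\mapsto ch_{K}([r^{-1}(W)]_K)$. Hence it carries $ch_{\hX}(\CE)$ to $ch_{K}([\CE]_K)$ for every $\CE\in\Vect_G(\hX)$, and, the Chern character determining the Chern classes rationally, it carries $c^i(\CE)$ to $c^i([\CE]_K)$ in Betti cohomology. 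Betti and $\ell$-adic Chern classes being compatible under the comparison isomorphisms --- both are built from the first Chern class of a line bundle via the splitting principle, and the $\ell$-adic cycle class is the standard one of \cite{SGA4.5} --- this yields $\phi(c^i_\ell(\CE))=c^i_\ell([\CE]_K)$, which is the asserted formula, and in particular $\phi$ is characterized by its effect on these classes and so is independent of the chosen embedding. Finally, by Proposition~\ref{KhX}~2) the Chern classes of $G$-equivariant bundles generate $CH(\hX)_\QQ$ as a ring, and the cycle map $CH(\hX)_\QQ\otimes\Ql\to H^{2*}(\hX_{\Qbar},\Ql(*))$ is surjective since $\hX$ is a flag variety; so the classes $c^i_\ell(\CE)$ generate $H^{2*}(\hX_{\Qbar},\Ql(*))$ as a $\Ql$-algebra, and, applying $\phi$, the classes $c^i_\ell([\CE]_K)$ generate $H^{2*}(_KS(G,X)_{\Qbar},\Ql(*))_v$.

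For Galois equivariance, write $\Gamma=\Gal(\Qbar/E(G,X))$. Since $G$, $\hX$ and $_KS(G,X)$ carry canonical models over $E(G,X)$ and the $T(g)$ are rational over $E(G,X)$, the group $\Gamma$ acts by ring automorphisms on both graded algebras above and, by conjugation, on $\Vect_G(\hX_{\Qbar})$. For $\sigma\in\Gamma$ and $\CE\in\Vect_G(\hX_{\Qbar})$ the functoriality of $\ell$-adic Chern classes gives $\sigma(c^i_\ell(\CE))=c^i_\ell({}^{\sigma}\CE)$ on $\hX$, while \cite[Thm.~4.8]{Har85} --- that the automorphic vector bundle functor commutes with the $\Gamma$-action --- gives ${}^{\sigma}([\CE]_K)\cong[{}^{\sigma}\CE]_K$, hence $\sigma(c^i_\ell([\CE]_K))=c^i_\ell([{}^{\sigma}\CE]_K)$ on $_KS(G,X)$. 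Combining these with the preceding paragraph, for all $\sigma\in\Gamma$, $\CE\in\Vect_G(\hX)$ and $i\ge 0$ one gets $\sigma(\phi(c^i_\ell(\CE)))=\sigma(c^i_\ell([\CE]_K))=c^i_\ell([{}^{\sigma}\CE]_K)=\phi(c^i_\ell({}^{\sigma}\CE))=\phi(\sigma(c^i_\ell(\CE)))$. Thus $\sigma\circ\phi$ and $\phi\circ\sigma$ are $\Ql$-algebra homomorphisms agreeing on a generating set of $H^{2*}(\hX_{\Qbar},\Ql(*))$, so they coincide; that is, $\phi$ is $\Gamma$-equivariant.

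I expect the equivariance step to be where the actual content sits, and what should make it run without any explicit description of the Galois action on the cohomology of the flag variety is precisely the combination of the generation statement of Proposition~\ref{KhX}~2) with the Galois equivariance of $[\,\cdot\,]_K$ from \cite[Thm.~4.8]{Har85}. The apparent subtlety that a given $G$-equivariant bundle is a priori defined only over the extension $E'_h$ of Lemma~\ref{rational} and not over $E(G,X)$ should cause no trouble, since the equivariance we invoke is that of the functor as a whole and $\sigma$ merely permutes the finitely many relevant bundles and their Chern classes.
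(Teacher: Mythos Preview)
Your argument is correct and follows the same approach as the paper's proof, which is a single sentence: both sides are generated by the Chern classes $c^i_\ell(\CE)$ and $c^i_\ell([\CE]_K)$, and the isomorphism is defined on those Chern classes. You have simply unpacked this sentence in full, supplying the comparison with the Betti statement of Proposition~\ref{isomQ}, the generation statement via Proposition~\ref{KhX}~2), and the Galois-equivariance check via \cite[Thm.~4.8]{Har85}---all of which the paper records in the paragraph preceding the proposition rather than in the proof itself.
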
 
\begin{proof}  Both sides are generated by Chern classes $c^i_\ell(\CE) $ and $c^i_\ell([\CE]_K)$, and the isomorphism is defined on those Chern classes.
\end{proof}

\section{Chern classes in Deligne cohomology}\label{Deligne}
We use the notations from Remarks~\ref{rmks:b} 2).  First we recall some of the basic properties of automorphic vector bundles in the image of $\Rep_\Qbar(G)$.

\begin{prop} \label{prop:hodge} Let $i:  \Rep_\Qbar(G) \ra \Vect_G(\hX)$ be the composition of the natural inclusion $\Rep_\Qbar(G) \ra \Rep_\Qbar(K_h)$ with the inverse of the equivalence \eqref{repK}.   Let $\rho:  G \ra GL(V)$ be a finite-dimensional representation.  Then
\begin{itemize}
\item[(a)]  The vector bundle $[i(\rho)]$ on $_KS(G,X)$ has a canonical flat connection.
\item[(b)]  Let $[i(\rho)]^{\nabla}$ denote the local system on $_KS(G,X)$ that corresponds to the flat connection of (a).  Let $Z \subset _KS(G,X)$ be a connected component, let $z \in Z$ be a base point, and let $r_z:  \pi_1(Z,z) \ra V$ be the monodromy representation attached to $[i(\rho)]^{\nabla}$.  Then $V$ has a model over the integers $\CO_F$ of a number field $F$ that is preserved by the image of $r_z$.
\item[(c)]  If $\rho$ is defined over $\QQ$ and  then $[i(\rho)]$ is endowed with a canonical variation of Hodge structure, which is a direct sum of variations of pure Hodge structures.
\end{itemize}
\end{prop}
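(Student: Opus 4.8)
The plan is to deduce all three statements from the explicit shape of $[i(\rho)]_K$. For $\rho\in\Rep_\Qbar(G)$ the $G$-equivariant bundle $i(\rho)$ on $\hX$ is, just as in the proof of Proposition~\ref{KhX}, the constant bundle $V\times\hX$ with $G$ acting diagonally; restricting along the Borel embedding $\beta\colon X\hookrightarrow\hX$ turns it into the constant bundle $V\times X$ with the diagonal $G(\RR)$-action, so that $[i(\rho)]_K=G(\QQ)\backslash\bigl((V\times X)\times G(\af)\bigr)/K$, with $\gamma\in G(\QQ)$ acting by $(v,x,g)\mapsto(\rho(\gamma)v,\gamma x,\gamma g)$ and $K$ acting by right translation on $G(\af)$. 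This is essentially \cite[Lemma~3.6]{Har85}, and everything below is read off from it.

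For (a) I would note that the tautological flat connection $d$ on $V\times X$ is invariant under the diagonal $G(\RR)$-action, hence under $G(\QQ)$, and is trivially invariant under right translation by $G(\af)$ and $K$; therefore it descends to a flat connection on $[i(\rho)]_K$. Equivalently, $G(\af)/K$ is discrete, so each connected component of $_KS(G,X)$ is of the form $\Gamma_g\backslash X^+$ for a (contractible) component $X^+$ of $X$ and an arithmetic group $\Gamma_g=G(\QQ)\cap gKg^{-1}$, and there $[i(\rho)]_K$ is the locally constant bundle attached to $\rho|_{\Gamma_g}$. This same description identifies the monodromy $r_z$ of (b) with (a conjugate of) $\rho|_{\Gamma_g}\colon\Gamma_g\ra GL(V)$, so (b) reduces to finding a $\Gamma_g$-stable $\CO_F$-lattice in $V$, where $F$ is a number field over which $\rho$ is defined. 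Since $\Gamma_g$ is contained in the compact open subgroup $gKg^{-1}\subset G(\af)$, I would do this place by place: keep a fixed $\CO_F$-lattice at the finitely many places away from which some integral model of $(G,V)$ already does the job, and at each remaining place replace it by a lattice stabilized by the relevant compact $p$-adic group; the resulting global $\CO_F$-lattice is then $\Gamma_g$-stable.

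For (c), assume $\rho$ defined over $\QQ$. For $x\in X$, the homomorphism $\rho\circ x\colon\mathbb S\ra GL(V)_\RR$ is a real Hodge structure on $V$ whose Hodge filtration $F^\bullet_xV_\CC$ has stabilizer a parabolic $P_x\subset G_\CC$; the key point is that $x\mapsto P_x$ is precisely $\beta$ composed with the identification $\hX=G/P_h$, so the Hodge-filtration subbundle of $V\times X$ is the restriction of the tautological filtration of the flag variety $\hX$ — in particular a filtration by holomorphic, $G(\RR)$-equivariant subbundles, which therefore descends to a holomorphic filtration $F^\bullet$ of $[i(\rho)]$. The weight homomorphism $w_X\colon\Gm\ra G$ of the Shimura datum (the restriction of $h$ to the natural $\Gm\subset\mathbb S$) is central, hence independent of the chosen base point; its image lies in the maximal $\RR$-split central torus, which by the standing hypothesis of Section~\ref{review} is $\QQ$-split, so $w_X$ is defined over $\QQ$, and $\rho\circ w_X$ splits $V$ canonically into $G$-subrepresentations $V=\bigoplus_n V_n$ with $V_n$ of pure weight $n$. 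This splitting is $G(\QQ)$-equivariant, hence flat and compatible with $F^\bullet$, so $[i(\rho)]=\bigoplus_n[i(\rho|_{V_n})]$ as flat bundles with filtration, the $n$-th summand being fiberwise a Hodge structure of pure weight $n$. Finally I would check Griffiths transversality $\nabla F^p\subset F^{p-1}\otimes\Omega^1$: this follows from the Shimura-datum axiom that $\mathrm{ad}\circ h$ defines a Hodge structure of type $\{(1,-1),(0,0),(-1,1)\}$ on the Lie algebra of $G$, together with the identification of the holomorphic tangent space of $X$ at $h$ with the summand $\fg^{-1,1}$, which lowers the filtration index by one. Polarizability of each $V_n$, should one want the $V_n$ to be variations of polarized Hodge structures, follows from the complete reducibility of $\rho$ and the Cartan-involution axiom (see \cite{Mil05}).

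Parts (a) and (b) are essentially formal once the constant-bundle description is in hand; the substance is in (c). I expect the genuinely delicate point to be the equivariance bookkeeping there — verifying that the holomorphic Hodge filtration and the weight decomposition, both defined a priori only at the base point $h$, propagate $G(\RR)$-equivariantly over all of $X$ and hence descend to $_KS(G,X)$, and that Griffiths transversality, a condition tested at $h$, then holds at every point. All of this is encoded in the Borel embedding and in the realization of $\hX$ as $G/P_h$, so I expect it to go through, but this is where the care is needed.
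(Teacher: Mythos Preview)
Your argument is correct. For (a) and (c) you are essentially unpacking the references the paper cites (\cite[Lem.~3.6]{Har85} for (a), \cite[1.1.13--1.1.17]{Del79} for (c)), so there is no real divergence there. Part (b) is where the routes differ: the paper first reduces to a $\QQ$-representation by Weil restriction and then appeals to Lemma~\ref{kling}, whose proof cites Platonov--Rapinchuk for the facts that the image of an arithmetic group under an algebraic homomorphism is again arithmetic and that an arithmetic subgroup of $GL(W)$ stabilizes a lattice. You instead use directly that $\Gamma_g$ sits inside the compact open $gKg^{-1}\subset G(\af)$, produce a stable lattice at each finite place by compactness, and glue. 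Your approach is more elementary and self-contained --- it sidesteps the structural input from \cite{PR94} --- but the gluing step and the bookkeeping arising from $\rho$ being defined over $F$ rather than $\QQ$ (so that the local action is on $V_F\otimes_F(F\otimes_\QQ\QQ_p)$, not on $V\otimes_\QQ\QQ_p$) would benefit from one more sentence of detail; the paper's route is shorter precisely because it outsources those details to \cite{PR94}.
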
   
\begin{proof}  These points are all well-known.  For (a), one can cite \cite[Lem.~3.6]{Har85}; for (c) see  of \cite[Section~1.1]{Del79}, especially 1.1.13-1.1.17.    For (b) we use Lemma~\ref{kling}, the  proof  of which was provided by Bruno Klingler.   Indeed, the complex variety $Z$ in (b) is the quotient of a connected component $X^0$ of $X$, which is contractible, by a congruence subgroup $\Gamma \subset G(\QQ)$, and the topological fundamental group  $\pi_1(Z,z)$ can be identified with $\Gamma$.   It suffices to assume $(r_z,V)$ is irreducible.  The reductive group $G$ splits over a number field $F$, and by the theory of Chevalley groups, every irreducible representation of $G$ has a model over $F$.  Then $R_{F/\QQ}(r_z,V)$ is a representation defined over $\QQ$, to which Lemma~\ref{kling} applies.   The lattice $L$ of Lemma~\ref{kling} generates an $\CO_F$ lattice in $V$ which is invariant under $\Gamma$.  
\end{proof}

\begin{lem}\label{kling}  Let $G$ be a linear algebraic group over $\QQ$ and $\Gamma \subset G(\QQ)$ an arithmetic subgroup.  Let $W$ be a vector space over $\QQ$ and let $r:  G \ra GL(W)$ be a representation defined over $\QQ$.  Then there exists a lattice $L$ in $W$ such that $r(\Gamma) \subset GL(L)$.  \end{lem}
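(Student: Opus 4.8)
The plan is to reduce everything to the classical observation that the matrix coefficients of a $\QQ$-rational representation have uniformly bounded denominators on an integral form of $G$, and then to produce the lattice by averaging. We may assume $W$ is finite dimensional; fix a $\QQ$-basis, identifying $W = \QQ^m$ and $GL(W) = GL_{m,\QQ}$.

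First I would unwind the definition of ``arithmetic subgroup'': there is a closed $\QQ$-embedding $\iota: G \hookrightarrow GL_{n,\QQ}$ for which $\Gamma$ is commensurable with $G(\ZZ) := \iota^{-1}(GL_n(\ZZ))$. Set $\Gamma' = \Gamma \cap G(\ZZ)$, a subgroup of finite index in $\Gamma$, and fix a transversal $\gamma_1, \dots, \gamma_k$ for $\Gamma'\backslash \Gamma$. Only one choice of $\iota$ is needed; independence of the embedding plays no role.

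Next comes the heart of the matter. Each matrix coefficient $g \mapsto r(g)_{ij}$ lies in $\QQ[G]$, hence, via $\iota$, is the restriction of an element of $\QQ[GL_n] = \QQ[x_{ab}][\det^{-1}]$. Clearing the denominators of the finitely many numerator polynomials, I obtain $d \ge 0$, a positive integer $N$, and polynomials $q_{ij} \in \ZZ[x_{ab}]$ with $r(g)_{ij} = N^{-1}\det(\iota(g))^{-d}\,q_{ij}(\iota(g))$ for all $g \in G$. If $g \in G(\ZZ)$ then $\iota(g) \in GL_n(\ZZ)$, so $\det(\iota(g)) = \pm 1$ and $q_{ij}(\iota(g)) \in \ZZ$; hence $N\,r(g) \in M_m(\ZZ)$ for every $g \in G(\ZZ)$, in particular for every $\delta \in \Gamma'$. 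This uniform bound on denominators over $\Gamma'$ is, I expect, the only place the hypotheses are really used; it is essentially immediate, and the rest is bookkeeping. One should not hope for such a bound over all of $\Gamma$, which is why the argument must be staged through $\Gamma'$.

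Finally I would assemble the lattice in two steps. Put $L_0 = \ZZ^m$ and $L' = \sum_{\delta \in \Gamma'} r(\delta) L_0$, the $\ZZ$-span of the vectors $r(\delta)e_i$; by the bound just proved $L_0 \subseteq L' \subseteq \tfrac1N L_0$, so $L'$ is a finitely generated $\ZZ$-module spanning $W$, i.e. a lattice, and it is $\Gamma'$-stable since $\Gamma'$ is a group. Then set $L = \sum_{j=1}^k r(\gamma_j) L'$, a finite sum of lattices, hence a lattice. For $\gamma \in \Gamma$, write $\gamma\gamma_j = \gamma_{\sigma(j)}\delta_j$ with $\sigma$ a permutation of $\{1,\dots,k\}$ and $\delta_j \in \Gamma'$; then $r(\gamma)L = \sum_j r(\gamma_{\sigma(j)})r(\delta_j)L' = \sum_j r(\gamma_{\sigma(j)})L' = L$, so $r(\Gamma) \subseteq GL(L)$. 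The only subtlety to watch is that $\Gamma'$ need not be normal in $\Gamma$, so the averaging is over a transversal rather than over a quotient group; with that caveat the conclusion follows.
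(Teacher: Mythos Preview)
Your argument is correct; the only slip is cosmetic. You write ``a transversal for $\Gamma'\backslash\Gamma$'' but then use the decomposition $\gamma\gamma_j=\gamma_{\sigma(j)}\delta_j$ with $\delta_j\in\Gamma'$, which is the \emph{left} coset decomposition. The computation in the last paragraph needs the transversal to be for $\Gamma/\Gamma'$ so that left multiplication by $\gamma$ permutes the cosets; with that one-word correction everything goes through. (Alternatively, pass to the normal core of $\Gamma'$ in $\Gamma$, still of finite index, and the distinction disappears.)

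The route is genuinely different from the paper's. The paper argues in two strokes by citing Platonov--Rapinchuk: first, the image $r(\Gamma)$ is an arithmetic subgroup of $H:=r(G)\subset GL(W)$ (\cite[Thm.~1.4]{PR94}); second, the faithful case --- an arithmetic subgroup of a $\QQ$-subgroup of $GL(W)$ stabilizes a lattice --- is \cite[Prop.~4.2]{PR94}. Your proof unpacks both steps at once in an elementary, self-contained way: the bounded-denominator observation on matrix coefficients over $G(\ZZ)$ is precisely what underlies \cite[Prop.~4.2]{PR94}, and the final averaging over coset representatives replaces the appeal to preservation of arithmeticity under $\QQ$-morphisms. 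The paper's version is shorter on the page but outsources the content; yours makes the mechanism (uniform denominator bound plus finite-index averaging) completely explicit and avoids external references, at the cost of a few more lines.
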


\begin{proof}  Let $H = r_z(G) \subset GL(W)$.  By  \cite[Thm.~1.4]{PR94}, $r_z(\Gamma)$ is an arithmetic subgroup of $H$.   The Lemma then follows from  \cite[Prop.~4.2]{PR94}, which is the special case where $r$ is injective.
\end{proof}

\begin{thm}\label{thm:vanD}
Let $_KS(G,X)$ be a  Shimura variety. Then 
\ga{}{  c_{\mathcal{D}} \circ c^{>0}_K|_{ \Rep_\CC(G) }=0. \notag}

\end{thm}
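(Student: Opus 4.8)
The plan is to establish, for every $\rho\in\Rep_\CC(G)$ and every $n\ge1$, that the class
\[
c_\mathcal{D}\big(ch_n([i(\rho)]_K)\big)\in H^{2n}_\mathcal{D}(S,\QQ(n))
\]
vanishes, where $ch_n$ denotes the degree-$n$ component of the Chern character, $S$ abbreviates the Shimura variety ${}_KS(G,X)$ (assumed projective; the general case is discussed at the end), $i$ is the functor of Proposition~\ref{prop:hodge}, and $[i(\rho)]_K$ is the automorphic bundle attached to $\CE:=i(\rho)$ by \eqref{autovb}. This suffices, since $c_\mathcal{D}\circ c_K^{>0}|_{\Rep_\CC(G)}(\rho)=\sum_{n\ge1}c_\mathcal{D}(ch_n([\CE]_K))$.

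First I would show that each $c_\mathcal{D}(ch_n([\CE]_K))$ is killed by the Betti realization $H^{2n}_\mathcal{D}(S,\QQ(n))\to H^{2n}(S,\QQ(n))$. That realization is a ring homomorphism taking a Deligne Chern class to the ordinary one, and, since $[\CE]_K$ carries a flat connection by Proposition~\ref{prop:hodge}(a), Chern--Weil theory forces all its ordinary Chern classes in $H^{2*}(S,\QQ)$ to vanish; as $ch_n$ is a rational polynomial without constant term in $c_1,\dots,c_n$, the realization of $c_\mathcal{D}(ch_n([\CE]_K))$ is therefore $0$. Writing $H^j(n):=H^j(S,\QQ(n))$, the rational fundamental exact sequence for Deligne cohomology of the projective variety $S$ (see \cite[Section~7]{EV88})
\[
0\to \mathrm{Ext}^1_{\mathrm{MHS}}(\QQ(0),H^{2n-1}(n))\to H^{2n}_\mathcal{D}(S,\QQ(n))\to \mathrm{Hom}_{\mathrm{MHS}}(\QQ(0),H^{2n}(n))\to 0,
\]
whose right-hand map is precisely that realization, then places $c_\mathcal{D}(ch_n([\CE]_K))$ in the subgroup $\mathrm{Ext}^1_{\mathrm{MHS}}(\QQ(0),H^{2n-1}(n))$.

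Next I would use the volume character to annihilate this remaining class. By Corollary~\ref{lem:volcoh2}, $c_\mathcal{D}(ch([\CE]_K))$ lies in the $v$-eigenspace $H^{2*}_\mathcal{D}(S)_v$ of $\CH_K$, and because the $\CH_K$-action preserves cohomological degree so does its degree-$n$ component. Every arrow of the exact sequence above is functorial, and each Hecke operator $T(g)=\pi_{2,g*}\pi_{1,g}^*$ --- with $\pi_{1,g}$ and $\pi_{2,g}$ finite \'etale --- is a degree-preserving morphism of Hodge structures, so $\CH_K$ acts compatibly on the whole sequence and $c_\mathcal{D}(ch_n([\CE]_K))$ lies in the $v$-eigenspace of $\mathrm{Ext}^1_{\mathrm{MHS}}(\QQ(0),H^{2n-1}(n))$. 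Decomposing the finite-dimensional $\CH_K$-module $H^{2n-1}(S,\QQ)$ into generalized eigenspaces, which are sub-Hodge-structures, and using additivity of $\mathrm{Ext}^1_{\mathrm{MHS}}(\QQ(0),-)$, this $v$-eigenspace is $\mathrm{Ext}^1_{\mathrm{MHS}}(\QQ(0),H^{2n-1}(S,\QQ)_v(n))$; and $H^{2n-1}(S,\QQ)_v\cong H^{2n-1}(\hX,\QQ)=0$ by Proposition~\ref{isomQ}, since the flag variety $\hX$ has a Schubert cell decomposition and hence no odd-degree rational cohomology. Therefore $c_\mathcal{D}(ch_n([\CE]_K))=0$ for all $n\ge1$, proving the theorem.

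Once Proposition~\ref{isomQ} is available the argument is largely formal; the one point demanding care is the compatibility used in the last step --- that the Hecke correspondences act on Deligne cohomology and on the Betti cohomology of $S$ as morphisms of Hodge structures coherently with the realization sequence, so that its $v$-eigenspace may be computed term by term. Two remarks: for $n\ge2$ and $S$ projective one can instead invoke Reznikov's theorem \cite{Rez94} directly, and for $n=1$ one even obtains the vanishing in $CH^1(S)_\QQ$, using that $\det[\CE]_K=[i(\det\rho)]$ has finite monodromy by Proposition~\ref{prop:hodge}(b),(c). Finally, when $S$ is not projective the same scheme applies provided Proposition~\ref{isomQ} is replaced by the analogous computation of the $v$-eigenspace --- only the trivial automorphic representation contributes, so again $H^{\mathrm{odd}}(S,\QQ)_v=H^{\mathrm{odd}}(\hX,\QQ)=0$ --- and the exact sequence is taken for the mixed Hodge structure on the cohomology of the open variety.
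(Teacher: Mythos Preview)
Your argument is correct and coincides with the alternative route the paper sketches immediately \emph{after} its formal proof: place the Deligne Chern class in the $v$-eigenspace via Corollary~\ref{lem:volcoh2}, then kill the intermediate-Jacobian piece using $H^{2n-1}(_KS(G,X),\QQ)_v=0$ from Proposition~\ref{isomQ}, and kill the remaining piece by Chern--Weil for the flat connection. This is exactly the content of Proposition~\ref{prop:oddD} combined with flatness. The paper's \emph{official} proof, however, is different and shorter: it simply invokes \cite[Thm.~0.2]{CE05} together with Proposition~\ref{prop:hodge}, which applies to arbitrary (not just projective) Shimura varieties and in fact yields the stronger conclusion that the Chern--Simons invariants of $[\CE]_K$ are torsion. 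Your approach has the virtue of being self-contained within the paper's framework (no appeal to \cite{CE05} or Reznikov), at the cost of the projectivity hypothesis needed for Proposition~\ref{isomQ}; you handle this honestly in your final paragraph, and indeed Remark~\ref{rmks:rez}~1) confirms that only the surjectivity in Proposition~\ref{isomQ} is needed, which holds in general.
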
 
\begin{proof} Given Proposition~\ref{prop:hodge}, the theorem
 is a direct consequence of \cite[Thm.~0.2]{CE05}, which in fact says more: the Chern-Simons invariants of the flat connection on $[\CE]$ are torsion, for $\CE$ coming from $\Rep_{\mathbb{C}}(G)$.

\end{proof}
However, for projective Shimura varieties,  the theorem is a consequence of Lemma~\ref{lem:volcoh2}
and of the following proposition.

We denote by H$_{dR}$ de Rham cohomology. 
\begin{prop} \label{prop:oddD}
Let $_KS(G, X)$ be a  projective Shimura variety. Then 
$$H^{2n}_{\mathcal{D}}( _KS(G, X), \QQ(n))_v  \subset H^{2n}_{dR}( _KS(G, X)).$$

\end{prop}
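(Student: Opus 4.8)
The plan is to use the standard description of Deligne cohomology of a smooth projective variety via a short exact sequence, and to show that for the $v$-eigenspace the "transcendental" part of the sequence vanishes, leaving only a de Rham piece. Recall that for a smooth projective variety $Y$ over $\CC$ one has an exact sequence
\ga{}{ 0 \ra J^n(Y) \ra H^{2n}_{\mathcal{D}}(Y,\QQ(n)) \ra \mathrm{Hg}^n(Y) \ra 0, \notag}
where $\mathrm{Hg}^n(Y) = H^{2n}(Y,\QQ(n)) \cap F^n H^{2n}_{dR}(Y)$ is the group of Hodge classes and $J^n(Y) = H^{2n-1}(Y,\CC)/(F^n + H^{2n-1}(Y,\QQ(n)))$ is the intermediate Jacobian, which is a subquotient of $H^{2n-1}_{dR}(Y)$. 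First I would apply the Hecke operators $T(g)$ to this whole sequence: since the cycle map and all the functorial maps commute with the Hecke action (as discussed after Corollary~\ref{lem:volcoh}), the sequence is $\CH_K$-equivariant, and so passing to $v$-eigenspaces gives an exact sequence of $v$-eigenspaces. Thus it suffices to show that the $v$-eigenspace of $J^n$, namely a subquotient of $H^{2n-1}(_KS(G,X),\CC)_v$, is zero; granting this, $H^{2n}_{\mathcal{D}}(_KS(G,X),\QQ(n))_v \hookrightarrow \mathrm{Hg}^n(_KS(G,X))_v \subset H^{2n}_{dR}(_KS(G,X))$, which is the assertion.

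The key input is Proposition~\ref{isomQ}: $H^i(_KS(G,X),\QQ)_v \cong H^i(\hX,\QQ)$, and $\hX = G/P_h$ is a flag variety, so its odd cohomology vanishes. Hence $H^{2n-1}(_KS(G,X),\CC)_v = H^{2n-1}(_KS(G,X),\QQ)_v \otimes \CC \cong H^{2n-1}(\hX,\QQ)\otimes\CC = 0$ for all $n$. Since $J^n(_KS(G,X))_v$ is a subquotient of $H^{2n-1}(_KS(G,X),\CC)_v$ — more precisely the quotient of the $v$-part of $H^{2n-1}_{dR}$ by its $F^n$ part and the image of the $v$-part of the Betti lattice — it vanishes as well. (One must be a little careful that the Hodge filtration is compatible with the Hecke action, but this is automatic: the $T(g)$ are induced by algebraic correspondences and hence act as morphisms of Hodge structures, so they respect $F^\bullet$.) Therefore the connecting map $H^{2n}_{\mathcal{D}}(_KS(G,X),\QQ(n))_v \to \mathrm{Hg}^n(_KS(G,X))_v$ is injective, and since $\mathrm{Hg}^n \subset F^n H^{2n}_{dR} \subset H^{2n}_{dR}$, we obtain $H^{2n}_{\mathcal{D}}(_KS(G,X),\QQ(n))_v \subset H^{2n}_{dR}(_KS(G,X))$, as claimed.

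The main obstacle, such as it is, is purely bookkeeping: one needs the exact sequence for Deligne cohomology and each of its terms to be $\CH_K$-equivariant, and one needs the formation of $v$-eigenspaces to be exact. The latter is fine because $\CH_K$ acts on finite-dimensional (in each degree) $\QQ$-vector spaces and semisimply on the relevant cohomology — or more simply because taking the image of an idempotent-like projector onto a generalized eigenspace is exact — so applying it to the three-term sequence preserves exactness. The former follows from the functoriality of Deligne cohomology and of the Betti/de Rham realizations under finite étale pullback $\pi_{1,g}^*$ and proper pushforward $\pi_{2,g*}$, exactly as invoked for Corollary~\ref{lem:volcoh2}. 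Once these compatibilities are in place the argument is immediate from the vanishing of odd cohomology of the compact dual. I would also remark that this gives the stated Theorem~\ref{thm:vanD} in the projective case: by Corollary~\ref{lem:volcoh2}, $c_{\mathcal{D}}\circ c_K([\CE]_K)$ lies in $H^{2*}_{\mathcal{D}}(_KS(G,X),\QQ(*))_v$, hence by the Proposition in $H^{2*}_{dR}$, where it coincides with the de Rham Chern class of the flat bundle $[i(\rho)]$ for $\CE = i(\rho)$; but the de Rham Chern classes of a bundle with a flat connection vanish in positive degrees, so $c_{\mathcal{D}}\circ c^{>0}_K|_{\Rep_\CC(G)} = 0$.
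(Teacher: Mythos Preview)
Your proof is correct and follows essentially the same approach as the paper: both use the standard exact sequence expressing $H^{2n}_{\mathcal D}$ as an extension of (a subspace of) $H^{2n}_{dR}$ by the intermediate Jacobian $H^{2n-1}(\CC)/(F^n + H^{2n-1}(\QQ(n)))$, observe that the Hecke action respects this sequence and each term of the quotient, and then kill the $v$-part of the odd-degree piece via Proposition~\ref{isomQ} and the vanishing of odd cohomology of $\hat X$. Your additional remarks on the exactness of taking $v$-eigenspaces and on deducing Theorem~\ref{thm:vanD} are also in line with how the paper uses the proposition.
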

\begin{proof}
Proposition~\ref{isomQ} implies that $H^{2n-1}(_KS(G, X), \QQ)_v=0$. On the other hand, the action of $\mathcal{H}_K$ on $H^{2n}_{\mathcal{D}}( _KS(G, X), \QQ(n))$ is via correspondences, is semi-simple. Thus it respects the exact sequence 
\ga{}{ 0\to H^{2n-1}(_KS(G, X), \CC)/\big( H^{2n-1}(_KS(G, X), \QQ(n))+ F^n\big) \notag \\ \to H^{2n}_{\mathcal{D}}( _KS(G, X), \QQ(n)) \to H^{2n}_{dR}( _KS(G, X)) \notag}
and on the left each of the three terms of the quotient. This finishes the proof.
\end{proof}

\begin{rmks} \label{rmks:rez} 
1) We have assumed $_KS(G, X)$ projective in order to apply Proposition ~\ref{isomQ} as stated above.  For general Shimura varieties the map in Proposition ~\ref{isomQ} is in any case surjective -- cf. \cite{GP02} -- and this suffices for the above proposition.  

\medskip
\noindent
2) Theorem~\cite[Thm.~0.2]{CE05} used in the proof of Theorem~\ref{thm:vanD} is a variant of Reznikov main theorem \cite{Rez94} which also rests on the fact that some oddly weighted forms do not exist.

\end{rmks}

\section{Chern classes in continuous $\ell$-adic cohomology}\label{elladic} 
Recall the field $E'_h$ introduced in \S 1.  It follows from Lemma \ref{rational} that every $\ell$-adic Chern class $c_K(W)$  belongs to 
$CH^*(_KS(G, X)_{E'_h})$.  Thus we can define the class $c_{\ell }\circ c_K(W) \in H^{2*}_{\rm cont}(_KS(G, X)_{E'_h}, \QQ_\ell(*))$.

\begin{rmk}  The discussion of the present section seems to depend on the choice of a CM point $h$.  In fact, it is not difficult to see that, if we let $E(G)$ denote the extension of $\QQ$ over which $G$ splits, we can replace $E'_h$ by $E(G)E(G,X)$, and Lemma \ref{rational} remains true.  The point is that, up to twisting by a power of the canonical bundle of $\hat{X}$, every irreducible $G$-equivariant vector bundle on $\hat{X}$ can be obtained, by a construction that is defined over $E(G,X)$, as a canonical quotient of  a $G$-equivariant vector bundle attached to an irreducible representation of $G$.  This is a simple application of the theory of the highest weight, applied to $K_h$ for variable $h \in \hat{X}$, and to $G$.  Since for our purposes it suffices to prove the vanishing in rational continuous $\ell$-adic cohomology after restriction to some finite extension, the choice of number field is immaterial.
\end{rmk}
  
We use the notation of Remarks~\ref{rmks:b} 3), 4). The action of the Hecke algebra $\mathcal{H}_K$ commutes with the Galois action of $\Gal(\bar \QQ/E(G,X))$. 
The Hecke algebra splits \'etale cohomology $H^i(_KS(G,X)_{\QQ}, \QQ_\ell)$ into a sum of generalized eigenspaces which are Galois invariant. Thus $\mathcal{H}_K$ splits the filtration stemming from the Hochshild-Serre spectral sequence.
In particular one has a filtration on $ H^{2n}_{\rm{cont}}( _KS(G, X)_{E'_h}, \QQ_\ell(n)))_v  $ with $0$-th graded quotient equal to $H^0(E'_h,    H^{2n}( _KS(G, X)_{\bar \QQ}, \QQ_\ell(n))_v) $, first graded 
quotient equal to $H^1(E'_h,    H^{2n-1}( _KS(G, X)_{\bar \QQ}, \QQ_\ell(n))_v) $ and last 
graded 
quotient  being the subspace  
\ga{}{ H^2(E'_h,    H^{2n-2}( _KS(G, X)_{\bar \QQ}, \QQ_\ell(n))_v)  \subset    H^{2n}_{\rm{cont}}( _KS(G, X)_{E'_h}, \QQ_\ell(n)))_v .\notag}

The pendant on the $\ell$-adic side of Proposition~\ref{prop:oddD}  is
\begin{prop} \label{prop:oddl}
Let $_KS(G, X)$ be a  projective Shimura variety. Then 
\ga{}{  \big(H^{2n}_{\rm{cont}}( _KS(G, X)_{E'_h}, \QQ_\ell(n)))/ 
H^2(E'_h, H^{2n-2}( _KS(G, X)_{\bar \QQ}, \QQ_\ell(n)))\big)_v \notag \\
  \subset H^{2n}( _KS(G, X)_{\bar \QQ}, \QQ_\ell(n)). \notag}

\end{prop}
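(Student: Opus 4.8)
The plan is to repeat the argument of Proposition~\ref{prop:oddD}, with the Hodge-theoretic exact sequence there replaced by the Hochschild--Serre filtration recalled just above. Recall that this filtration gives $H^{2n}_{\rm cont}(_KS(G,X)_{E'_h},\QQ_\ell(n))$ three graded pieces $H^i(E'_h, H^{2n-i}(_KS(G,X)_{\bar\QQ},\QQ_\ell(n)))$ for $i=0,1,2$, the $i=2$ piece being the bottom step (the degeneration at $E_2$ is Deligne's argument, \cite[Thm.~1.5]{Del68}), and that, the Hecke correspondences being rational over $E(G,X)\subset E'_h$, they act on the whole spectral sequence compatibly with the Galois action, so the filtration is stable under $\CH_K$. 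Consequently the quotient
$$Q := \big(\, H^{2n}_{\rm cont}(_KS(G,X)_{E'_h},\, \QQ_\ell(n))\ \big/\ H^2(E'_h,\, H^{2n-2}(_KS(G,X)_{\bar\QQ},\, \QQ_\ell(n)))\,\big)$$
carries a $\CH_K$-stable two-step filtration whose sub is $H^1(E'_h, H^{2n-1}(_KS(G,X)_{\bar\QQ},\QQ_\ell(n)))$ and whose quotient is $H^0(E'_h, H^{2n}(_KS(G,X)_{\bar\QQ},\QQ_\ell(n)))$. First I would record that passing to the $v$-eigenspace is exact on all the spaces in sight: the Hecke algebra acts semisimply on continuous $\ell$-adic cohomology, so $(-)_v$ is projection onto a direct summand.

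It then suffices to show that the sub piece dies on the $v$-eigenspace, i.e.\ that $H^1(E'_h, H^{2n-1}(_KS(G,X)_{\bar\QQ},\QQ_\ell(n)))_v = 0$. Writing $M := H^{2n-1}(_KS(G,X)_{\bar\QQ},\QQ_\ell(n)) = M_v\oplus M''$ as a $\Gal(\bar\QQ/E'_h)$-module, $M_v$ the $v$-eigenspace and $M''$ the sum of the remaining Hecke-eigenspaces (these are Galois-stable because Hecke commutes with Galois), functoriality of $H^1(E'_h,-)$ in the coefficients gives $H^1(E'_h, M)_v = H^1(E'_h, M_v)$, since $T(g)$ acts on $M_v$ as the scalar $v(K,g)$ and hence does so on $H^1(E'_h, M_v)$, while it acts on $H^1(E'_h, M'')$ without that eigenvalue. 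So one is reduced to $M_v=0$, which is exactly the vanishing of the odd-degree cohomology of the compact dual: by Proposition~\ref{isomQ}, which identifies $H^*(_KS(G,X),\QQ)_v$ with $H^*(\hX,\QQ)$ --- concentrated in even degrees since $\hX=G/P_h$ is a flag variety --- together with the Hecke-compatible comparison between Betti and $\ell$-adic cohomology and the fact that $(-)_v$ is defined over $\QQ$, one gets $H^{2n-1}(_KS(G,X)_{\bar\QQ},\QQ_\ell)_v\cong H^{2n-1}(\hX,\QQ)\otimes_{\QQ}\QQ_\ell=0$ (cf.\ also Proposition~\ref{prop:corr}).

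Granting this vanishing, the two-step filtration on $Q_v$ collapses and the edge map identifies $Q_v$ with a subspace of $H^0(E'_h, H^{2n}(_KS(G,X)_{\bar\QQ},\QQ_\ell(n)))\subset H^{2n}(_KS(G,X)_{\bar\QQ},\QQ_\ell(n))$, which is the assertion. I do not expect a genuine obstacle: the $E_2$-degeneration of the Hochschild--Serre spectral sequence, the Hecke-stability of its filtration, and the exactness of $(-)_v$ have all been arranged in this section, and the one geometric input --- that the flag variety $\hX$ has no odd-degree cohomology --- is the very fact that powers Proposition~\ref{prop:oddD}. The point I would take care over is the compatibility that lets $(-)_v$ be slid inside the Galois cohomology, namely that the Hecke action on $H^1(E'_h, M)$ is the one induced by functoriality from the action on $M$; this holds because each $T(g)$ is an algebraic correspondence rational over $E'_h$, and it is the only place where the rationality of the Hecke correspondences (rather than merely their existence over $\bar\QQ$) is used.
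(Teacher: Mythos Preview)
Your proof is correct and follows essentially the same approach as the paper's: the paper's own proof is a single sentence invoking Proposition~\ref{prop:corr} to conclude $H^{2n-1}(_KS(G,X)_{\bar\QQ},\QQ_\ell(n))_v=0$, leaving the passage through the Hochschild--Serre filtration implicit from the preceding paragraph. You have simply written out in full the mechanism (Hecke-stability of the filtration, the direct-sum decomposition $M=M_v\oplus M''$ as Galois modules, and functoriality of $H^1(E'_h,-)$) that the paper takes for granted.
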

\begin{proof}
Proposition~\ref{prop:corr} implies that $H^{2n-1}( _KS(G, X)_{\bar \QQ}, \QQ_\ell(n))_v=0$. 

\end{proof}

\begin{thm}\label{thm:vanl}
Let $_KS(G,X)$ be a projective Shimura variety. Then 
\ga{}{  c_{\ell } \circ c^{>0}_K|_{ \Rep_\QQ(G) }=0. \notag}

\end{thm}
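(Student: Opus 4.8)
The plan is to run the Hochschild--Serre argument exactly as in the proof of Proposition~\ref{prop:oddl} --- which pushes the $\ell$-adic Chern class into the deepest step of the Galois filtration --- and then to kill that step using that, on the $v$-eigenspace, the cohomology of $_KS(G,X)$ is spanned by classes of algebraic cycles defined over $E'_h$.

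By Lemma~\ref{rational} the bundle $[i(\rho)]$ and all its Chern classes are defined over $E'_h$ for $\rho\in\Rep_{\QQ}(G)$, and by Corollary~\ref{lem:volcoh2} the class $c_{\ell}\circ c^{>0}_K(\rho)=\sum_{n\ge1}c_{\ell}\big(ch_n([i(\rho)])\big)$ lies in $\bigoplus_{n\ge1}H^{2n}_{\rm cont}({}_KS(G,X)_{E'_h},\Ql(n))_v$. Fix $n\ge1$. On the $v$-eigenspace the Hochschild--Serre filtration (Hecke-stable, and degenerate at $E_2$ as recalled before Proposition~\ref{prop:oddl}) has graded pieces $H^i\big(E'_h,H^{2n-i}({}_KS(G,X)_{\Qbar},\Ql(n))_v\big)$ for $i=0,1,2$, and the $i=1$ one vanishes since $H^{2n-1}({}_KS(G,X)_{\Qbar},\Ql(n))_v=0$ by Proposition~\ref{prop:corr}. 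The image of $c_{\ell}(ch_n([i(\rho)]))$ in the $i=0$ piece $H^0\big(E'_h,H^{2n}({}_KS(G,X)_{\Qbar},\Ql(n))_v\big)\subset H^{2n}({}_KS(G,X)_{\Qbar},\Ql(n))$ is the geometric Chern character of $[i(\rho)]$, which by Proposition~\ref{prop:corr} corresponds to the Chern character of the homogeneous bundle $r^{-1}(\rho|_{K_h})$ on $\hX$; as in the proof of Proposition~\ref{KhX}~2) this bundle is a sum of $\dim\rho$ trivial bundles, so its positive-degree Chern character vanishes. Hence, by Proposition~\ref{prop:oddl}, $c_{\ell}(ch_n([i(\rho)]))$ lies in the last graded piece $H^2\big(E'_h,H^{2n-2}({}_KS(G,X)_{\Qbar},\Ql(n))_v\big)$; by Proposition~\ref{prop:corr} the coefficient module is $H^{2n-2}(\hX_{\Qbar},\Ql(n))$, which is spanned by Schubert classes, hence --- after enlarging $E'_h$ if necessary so that $G$ splits over it --- is a finite direct sum of copies of $\Ql(1)$ with trivial Galois action.

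The crux is to show this class vanishes. The key point is that the submotive $h({}_KS(G,X))_v$ cut out by the Hecke correspondences --- which are algebraic and defined over $E(G,X)\subset E'_h$ by \cite[Thm.~4.8]{Har85} --- is a sum of Tate motives over $E'_h$: its realization is $H^*(\hX_{\Qbar},\Ql)$ by Propositions~\ref{isomQ} and~\ref{prop:corr}, and this is generated by classes of algebraic cycles --- Chern classes of automorphic bundles --- which are rational over $E'_h$ by Lemma~\ref{rational} (one uses here the analogue over $_KS(G,X)$ of Proposition~\ref{KhX}~2), a statement purely about the $v$-eigenspace of cohomology). Granting this, the cycle-class map on $CH^n({}_KS(G,X)_{E'_h})_v$ factors through $CH^n$ of a sum of Tate motives over $E'_h$, which --- since $CH^m(\operatorname{Spec}E'_h)_{\QQ}=0$ for $m>0$ --- maps to $H^{2n}_{\rm cont}$ entirely through the degree-$0$ Galois-cohomology summand $H^0\big(E'_h,H^{2n}({}_KS(G,X)_{\Qbar},\Ql(n))_v\big)$, with zero component in the $H^2$-summand. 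Since $ch_n([i(\rho)])\in CH^n({}_KS(G,X)_{E'_h})_v$ (Lemma before Corollary~\ref{lem:volcoh}), the class $c_{\ell}(ch_n([i(\rho)]))$ has zero component in $H^2\big(E'_h,H^{2n-2}({}_KS(G,X)_{\Qbar},\Ql(n))_v\big)$; combined with the previous paragraph, $c_{\ell}(ch_n([i(\rho)]))=0$. Summing over $n\ge1$ gives $c_{\ell}\circ c^{>0}_K|_{\Rep_{\QQ}(G)}=0$.

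I expect the main obstacle to be precisely this motivic input --- that $h({}_KS(G,X))_v$ is a sum of Tate motives over $E'_h$, equivalently that one can produce honest orthogonal Chow idempotents (Chow--Künneth projectors) for the $v$-eigenspace out of the algebraic cycles at hand, and not merely the corresponding cohomological decomposition of Propositions~\ref{isomQ} and~\ref{prop:corr}. An alternative avoiding motivic language is to spread $_KS(G,X)$, the bundle and the Hecke correspondences out over a non-empty open $U\subset\operatorname{Spec}\CO_{E'_h}$ and run a weight argument over the residue fields, where $\operatorname{cd}\le1$ and the relevant geometric cohomology is pure (that of $\hX$), so the Hochschild--Serre spectral sequence has no surviving $E^{2,\bullet}$-term and the $H^2$-component of the local class dies; this disposes of all places of good reduction, and the finitely many remaining ones --- of bad reduction, or above $\ell$ --- must then be handled locally, which is where the flatness of $[i(\rho)]$ (Proposition~\ref{prop:hodge}), rather than merely the triviality of its geometric Chern classes, would enter.
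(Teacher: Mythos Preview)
Your reduction to the deepest step of the Hochschild--Serre filtration is correct and is exactly what the paper does: by Proposition~\ref{prop:oddl}, Corollary~\ref{lem:volcoh2}, and the vanishing of the geometric Chern character of $[i(\rho)]$ (via Proposition~\ref{prop:corr} and the triviality of $i(\rho)$ on $\hX$), the class $c_\ell(ch_n([i(\rho)]))$ lands in $H^2\big(E'_h,H^{2n-2}({}_KS(G,X)_{\Qbar},\Ql(n))_v\big)\cong H^2(E'_h,\Ql(1))^{\oplus h}$.

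The gap is precisely where you say it is. To kill this $H^2$-class you invoke that ``$h({}_KS(G,X))_v$ is a sum of Tate motives over $E'_h$'', i.e.\ that the Hecke projector onto the $v$-eigenspace and the algebraic cycles spanning its cohomology can be upgraded to orthogonal idempotents in the category of Chow motives. Propositions~\ref{isomQ} and~\ref{prop:corr} only give the cohomological decomposition; nothing in the paper (and nothing known in general for Shimura varieties) produces the Chow--K\"unneth projectors you would need for the factorization ``$CH^n({}_KS(G,X)_{E'_h})_v\to CH^n(\text{sum of Tate motives})$''. Your alternative spreading-out sketch has the same problem at the end: you yourself leave the places of bad reduction and those above $\ell$ unhandled, and this is exactly where the $H^2(E'_h,\Ql(1))$-contributions live, so the argument is incomplete there too.

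The paper avoids all of this with a short duality trick in the spirit of \cite[Prop.~2.3]{Ras95}. Poincar\'e duality on $_KS(G,X)_{\Qbar}$ restricts to a perfect $\Gal(\Qbar/E'_h)$-equivariant pairing
\[
H^{2n-2}({}_KS(G,X)_{\Qbar},\Ql(n))_v\;\times\;H^{2d-2n+2}({}_KS(G,X)_{\Qbar},\Ql(d-n+1))_v\;\longrightarrow\;\Ql(1),
\]
and since both factors are spanned by Chern classes of automorphic bundles defined over $E'_h$ (Proposition~\ref{prop:corr}, Lemma~\ref{rational}), this reads as $\Ql(1)^{\oplus h}\times\Ql(0)^{\oplus h}\to\Ql(1)$, inducing a non-degenerate pairing
\[
H^2(E'_h,\Ql(1)^{\oplus h})\;\times\;H^0(E'_h,\Ql(0)^{\oplus h})\;\longrightarrow\;H^2(E'_h,\Ql(1)).
\]
Now pair $c_\ell(c_K^n(W))$ against $c_\ell\big(ch^{d-n+1}([\CE]_K)\big)$ for $\CE$ coming from an arbitrary $K_h$-representation. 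On the Chow side one has
\[
c_K^n(W)\cup ch^{d-n+1}([\CE]_K)\in CH^{d+1}({}_KS(G,X))_\QQ=0
\]
for dimension reasons ($d=\dim{}_KS(G,X)$), so the image in $H^2(E'_h,\Ql(1))$ vanishes. As the classes $c_\ell(ch^{d-n+1}([\CE]_K))$ span $H^0(E'_h,\Ql(0)^{\oplus h})$, non-degeneracy forces $c_\ell(c_K^n(W))=0$. No motivic lifting, no local analysis at bad primes, and no use of the flat connection on $[i(\rho)]$ is needed.
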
 
\begin{proof}
By Proposition~\ref{prop:oddl}, together with Corollary~\ref{lem:volcoh2}, 
$c_{\ell } \circ c^{>0}_K|_{ \Rep_\QQ(G)} $ has values in 
$$H^2(E'_h, H^{2n-2}( _KS(G, X)_{\bar \QQ}, \QQ_\ell(n))_v).$$
We now apply a variant of the proof of \cite[Prop.~2.3]{Ras95}. Let us denote by $d$ the dimension of $_KS(G,X)$. Then 
Proposition~\ref{prop:corr} implies  that the non-degenerate  $E'_h$- equivariant cup-product 
\ml{}{ H^{2n-2}(_KS(G,X)_{\bar \QQ},\QQ_\ell( n)) \times 
H^{2d-2n+2}(_KS(G,X)_{\bar \QQ},\QQ_\ell( d-n+1)) \notag \\
\to H^{2d}(_K S(G,X)_{\bar \QQ}, \QQ_\ell(d+1))= \QQ_\ell(1) \notag }
restricts to a non-degenerate  $E'_h$-equivariant cup-product 
\ml{3.3}{ H^{2n-2}(_KS(G,X)_{\bar \QQ},\QQ_\ell( n))_v \times 
H^{2d-2n+2}(_KS(G,X)_{\bar \QQ},\QQ_\ell( d-n+1))_v  \\
\to H^{2d}(_K S(G,X)_{\bar \QQ}, \QQ_\ell(d+1))_v= H^{2d}(_K S(G,X)_{\bar \QQ}, \QQ_\ell(d+1))_v=
 \QQ_\ell(1).  }
 Write 
\ml{}{h={\rm dim}_{\QQ_\ell} H^{2n-2}(_KS(G,X)_{\bar \QQ},\QQ_\ell)_v= \\ {\rm dim}_{\QQ_\ell} 
H^{2d-2n+2}(_KS(G,X)_{\bar \QQ},\QQ_\ell)_v. \notag}
Then  \eqref{3.3}  is written as 
\ga{}{ \oplus_1^h \QQ(1) \times \oplus_1^h \QQ_\ell(0) \to \QQ_\ell(1) \notag}
as  a non-degenerate $E'_h$-equivariant pairing. Indeed,  by Proposition~\ref{prop:corr}, for all $i\in \mathbb{N}$, 
 $H^{2i}(_KS(G,X)_{\bar \QQ},\QQ_\ell( i))_v$ is spanned as a $\QQ_\ell$-vector space by the classes 
$$c^{i}_\ell([\mathcal{E}]_{K, \bar \QQ}) \in H^0(E'_h, H^{2i}(_KS(G,X)_{\bar \QQ},\QQ_\ell( i))),$$  
where $\mathcal{E}$ comes from a $K_h$-representation, so is algebraic. 
This implies that the pairing of $\QQ_\ell$-vector spaces
\ga{3.4}{H^2(E'_h, \oplus_1^h \QQ_\ell(1) )\times H^0(E'_h, \oplus_1^h \QQ_\ell(0)) \to H^2(E'_h, \QQ_\ell(1))}
is non-degenerate. 

\medskip 

On the other hand, for  $W \in {\rm Rep}_{\bar \QQ} (G)$, 
and $E \in {\rm Rep} (K_h)$ defining $[\mathcal{E}]_K$,
 one has 
\ga{}{ c_K^n (W)\cup ch^{d-n+1} ([\mathcal{E}]_K) \in CH^{d+1} (_KS(G,X))_{\QQ}=0. \notag }
(Recall here $c_K$ is defined in Conjecture~\ref{mainconj}.) Thus  
\ml{}{0= c_\ell ^{d+1}(  c_K^n (W)\cup ch^{d-n+1} ([\mathcal{E}]_K)  )= \notag  \\ c^n_\ell(  c_K^n (W)) \cup c^{d-n+1}_\ell([\mathcal{E}]_{K, \bar \QQ} ) \in  \notag
 H^2(E'_h, \QQ_\ell(1)) 
. \notag}
Applying \eqref{3.4} we conclude $c_K^n(W)=0$. This finishes the proof.
\end{proof}


\begin{rmk}[Syntomic cohomology]
In addition to Deligne and continuous $\ell$-adic cohomology, it is natural to consider syntomic cohomology as defined by Fontaine, Kato and Messing. Let us denote by $E'_{\frak{p}}$ the $p$-adic completion of the number field $E'_h$ at a place $\frak{p}$.  We assume that $_K S(G, X)_{E'_{\frak{p}}}$ is proper and has a semi-stable model. 
 Then with $p$-power torsion coefficients, the syntomic cohomology group 
 $H^{2n}_{\rm synt}(_K S(G, X)_{E'_{\frak{p}}},\ZZ/p^n\ZZ (n))$ is defined  as the \'etale cohomology group of the $\tau_{\le n}$-truncation of the vanishing cycle complex  \cite[Thm.~2.2]{KM92}. This isomorphism lifts to continuous $p$-adic coefficients  \cite[Proof~of~Cor.~4.5]{NN16} yielding a Hochshild-Serre spectral sequence 
\ml{}{E_2^{st}=H^s_{st}(E'_{\frak{p}}, H^t( _K S(G, X)_{\bar \QQ_{\frak{p}}}, \QQ_p(n)) \Longrightarrow\\
H^{s+t}_{\rm synt} (_K S(G, X)_{E'_{\frak{p}}}, \QQ(n)) \notag}
where $st$ stands for `stable', 
compatible with the one on continuous $p$-adic cohomology \cite[Thm.~4.8]{NN16}.  Thus the same proof as in Theorem~\ref{thm:vanl} yields the same result, with $c_\ell$ replaced by the syntomic Chern classes (\cite[Section~5]{NN16}).
\end{rmk}

\begin{rmk}[Construction of torsion classes in cohomology]  One motivation for studying the Chern classes of automorphic vector bundles is the hope that they might provide a way to construct interesting torsion classes in the Chow group, or that their $\ell$-adic Abel-Jacobi classes in $H^1(E'_h,    H^{2n-1}( _KS(G, X)_{\bar \QQ}, \QQ_\ell(n))_v) $ might be torsion.  It follows easily from \eqref{HHv} and \eqref{HHX} that any class in $K_0(\Vect_G(\hat{X}) )\simeq K_0(\Rep_{\QQ}(K_h))$ whose image   in the Chow group $CH(_K S(G, X))$ under all Chern classes 
of positive degree
 is torsion,  must necessarily belong to the ideal generated by the kernel of the augmentation map $K_0(\Rep_{\QQ}(G)) \ra \ZZ$.  Any torsion classes arising in this way would naturally be eigenvectors for the volume character of the Hecke algebra; in particular, the associated cohomology classes are {\it Eisenstein} classes, in the usual sense.
\end{rmk}



\end{document}